\newtheorem{theorem}{Theorem}[section]
\newtheorem{prop}{Proposition}[section]
\newtheorem{definition}{Definition}[section]%
\newtheorem{remark}{Remark}[section]
\numberwithin{equation}{section}
\title{Coupling conditions for linear hyperbolic relaxation systems in two-scales problems}
\author{
Juntao Huang\thanks{Department of Mathematics, Michigan State University, Lansing 48824, USA. E-mail: huangj75@msu.edu}\qquad
Ruo Li\thanks{School of Mathematical Sciences, Peking University, Beijing 100871, China. E-mail: rli@math.pku.edu.cn} \qquad
Yizhou Zhou\thanks{Corresponding author, School of Mathematical Sciences, Peking University, Beijing 100871, China. E-mail: zhouyz@math.pku.edu.cn} 
}
\date{\today}
\begin{document}
\maketitle{}

\begin{abstract}
This work is concerned with coupling conditions for linear hyperbolic relaxation systems with multiple relaxation times.
In the region with small relaxation time, an equilibrium system can be used for computational efficiency. Under the assumption that the relaxation system satisfies the structural stability condition and the interface is non-characteristic, we derive a coupling condition at the interface to couple the two systems in a domain decomposition setting.
We prove the validity by the energy estimate and Laplace transform, which shows how the error of the domain decomposition method depends on the smaller relaxation time and the boundary layer effects.
In addition, we propose a discontinuous Galerkin (DG) scheme for solving the interface problem with the derived coupling condition and prove the $L^2$ stability.
We validate our analysis on the linearized Carleman model and the linearized Grad’s moment system and show the effectiveness of the DG scheme.
\end{abstract}

\hspace{-0.5cm}\textbf{Keywords:}
\small{hyperbolic relaxation systems, domain decomposition, characteristic boundaries, Kreiss condition, discontinuous Galerkin scheme}\\


\section{Introduction}
In this paper we are interested in the linear hyperbolic relaxation system with the multiple relaxation times:
\begin{equation}\label{problem}
U_t+ AU_{x} = \frac{1}{\epsilon(x)}QU,\qquad x \in \mathbb{R}, \quad t>0.
\end{equation}
Here the unknown function $U=U(x,t)\in \mathbb{R}^n$, $A\in \mathbb{R}^{n\times n}$ and $Q\in \mathbb{R}^{n\times n}$ are constant matrices, and $\epsilon(x)>0$ is the relaxation time which has different orders of magnitudes over the domain. In particular, we consider the case where $\epsilon(x)$ is a piecewise constant function:
\begin{equation*}
\epsilon(x)=\left\{{\begin{array}{*{20}l}
1,\qquad\qquad x\leq 0,\\[3mm]
\epsilon\ll 1,\qquad x>0.
\end{array}}\right.
\end{equation*}

This type of multiscale problems describe many important physical phenomena with different relaxation times such as the space shuttle reentry problems in aerodynamics \cite{BLPQ,Klar2}, the transport problems for materials with very different opacities \cite{BM,GJL}. In these problems, the relaxation time varies drastically near the interface.

For the problems with constant small relaxation time $\epsilon(x)\equiv \epsilon \ll 1$, a systematical theory has been developed in the framework of the structural stability condition in \cite{Y1}. Under this condition, without loss of generality, one may always assume that $A$ and $Q$ are symmetric and $Q=\text{diag}(0,S)$ with $S\in\mathbb{R}^{m\times m}$ and $S<0$ (see \cite{Y1,Y3} and the references cited therein). Corresponding to the partition of $Q$, we write
\begin{equation*}
U =\begin{pmatrix}
u\\[1mm]
v
\end{pmatrix},\qquad 
A = \begin{pmatrix}
A_{11} & A_{12} \\[1mm]
A_{12}^T & A_{22}
\end{pmatrix}.
\end{equation*}
It was proved that, as $\epsilon$ goes to zero, the $\epsilon$-dependent solution $U^\epsilon$ to \eqref{problem} converges to the solution of the corresponding equilibrium system \cite{Y1}:
\begin{align}\label{equilibrium}
\left\{
\begin{array}{l}
u_t+A_{11}u_{x} = 0,\\[3mm]
v=0.
\end{array}\right.
\end{align}

Motivated by this result, a natural idea to solve \eqref{problem} efficiently is the so-called domain decomposition (DD) method. Instead of solving the multiscales problem \eqref{problem} directly, the DD method couples two systems in different domains with suitable coupling conditions, see e.g. \cite{BLP,JLW}. In particular, the solution $U^l$ on the left domain $x\leq 0$ satisfies
\begin{equation}\label{leftproblem}
U^l_t + AU^l_{x} = QU^l,\qquad x\leq 0,
\end{equation}
while on the right domain $x>0$, the approximated solution $U^r$ is determined by
\begin{align}\label{rightproblem}
U^r=\begin{pmatrix}
u^r\\[1mm]
v^r
\end{pmatrix},\qquad
\left\{
\begin{array}{l}
u^r_t+A_{11}u^r_{x} = 0,\\[3mm]
v^r=0,
\end{array}\right.
\quad x>0. 
\end{align}
The two problems \eqref{leftproblem} and \eqref{rightproblem} are connected by the coupling conditions at $x=0$. This approach avoids the numerical difficulty caused by the stiff source and also reduces the computational cost by solving the equilibrium system with a smaller size on part of the domain. Clearly, the success of the DD method depends on the appropriate coupling conditions.

In this paper, our main goal is to develop a systematical approach to derive the coupling conditions for the general linear hyperbolic relaxation systems with the multiple relaxation times. Besides the structural stability stability, we also assume that the boundary $x=0$ is non-characteristic for \eqref{leftproblem}, i.e., the coefficient matrix $A$ is invertible. Note that the boundary may be characteristic for the equilibrium system \eqref{rightproblem}. By resorting to the theory of initial-boundary value problems (IBVPs) for hyperbolic relaxation systems \cite{ZY}, we derive the coupling conditions for \eqref{leftproblem} and \eqref{rightproblem} at the interface and prove the validity by the energy estimate and Laplace transform. With the derived coupling condition, we proceed to develop a numerical scheme in solving the interface problem. Since the discontinuous Galerkin (DG) method handles effectively the interactions across element boundaries due to the compact stencils, see e.g. the review paper \cite{cockburn2001runge}, we present a numerical algorithm in the DG framework and prove the $L^2$ stability for the semi-discrete schemes.

We briefly review the works which are devoted to derive coupling conditions for the two scales problems. In \cite{JLW}, the coupling condition is provided for the Jin-Xin model and the validity is proved for the linear case with constant coefficients. More theoretical results for nonlinear systems are presented in \cite{CJLW}. Here we would like to compare our work with \cite{JLW} and give two remarks: (1) Our work deals with the \emph{general} hyperbolic relaxation systems, which can be taken as a generalization of \cite{JLW} only focusing on the Jin-Xin model. Therefore, this could result in more potential in different applications. (2) The results of \cite{JLW} are obtained with the aid of the theory of IBVPs for Jin-Xin model developed in \cite{XX}. Similar with this routine, our derivation is based on the theory of IBVPs for general hyperbolic relaxation systems \cite{ZY}.

Similar to the topic of our work, a related issue is to prescribe the coupling conditions between the kinetic equation (e.g. the Boltzmann equation) and its hydrodynamic limit (e.g. the compressible Euler equation). Many works are devoted to this issue, see e.g. \cite{BLP,Besse,BLPQ,CLL,Dellacherie,GK,Klar}. One popular approach to couple the kinetic model and the fluid model is to solve a so-called Knudsen layer equation as a translation at the interface.
We would like to point out the relationship between this issue and our study on the coupling conditions for relaxation system \eqref{problem}. Two typical methods to solve the kinetic equation are the discrete velocity method \cite{Gatignol,Mieussens} and the moment closure method \cite{CFL,CFL2,Levermore}, in which the resultant models are hyperbolic relaxation systems. In this scenario, our strategy to derive the coupling conditions can be directly applied. We will show this by using two simple models in Section \ref{sec:numerical-example}.

The rest of the paper is organized as follows. In Section \ref{sec:coupling-condition}, we derive the coupling conditions and rigorously prove their validity. In Section \ref{sec:numerical-scheme}, we propose a DG scheme for solving the interface problem and prove the stability for the semi-discrete scheme. In Section \ref{sec:numerical-example}, we explicitly derive the coupling conditions for the linearized Carleman model and the Grad’s moment system. We validate our analysis and the effectiveness of the DG scheme numerically. 

\section{Coupling conditions}\label{sec:coupling-condition}

In this section, we firstly derive the continuity condition for the original problem \eqref{problem} at the interface. Then, based on this continuity condition, we present the coupling condition for the systems \eqref{leftproblem} and \eqref{rightproblem} in the framework of domain decomposition method. We prove its validity by the energy estimate and Laplace transform. 

\subsection{Continuity condition}

Firstly, we analyze the behavior of the original problem \eqref{problem} at the interface.
If $U=U(x,t)$ is a weak solution \cite{Dafermos,Smoller} to the original problem \eqref{problem}, we have 
\begin{equation}\label{eq:def-weak-solution}
\int_0^T\int_{\mathbb{R}}\left[ \phi_t^T U  + \phi_{x}^T AU  + \frac{1}{\epsilon(x)}\phi^T QU \right]dxdt = 0,
\end{equation}
for any test function $\phi=\phi(x,t)$ which has compact support contained in the domain $\{(x,t)\in\mathbb{R}^2 | t>0\}$.
Provided that the support of $\phi$ belongs to $D=D_-\cup D_+$ with $D_- = D\cap\{x<0\}$ and $D_+ = D\cap\{x>0\}$, \eqref{eq:def-weak-solution} can be written as
$$
\int_{D_-} \phi_t^T U  + \phi_{x}^T AU + \frac{1}{\epsilon(x)}\phi^T QU = - \int_{D_+} \phi_t^T U  + \phi_{x}^T AU + \frac{1}{\epsilon(x)}\phi^T QU.
$$
We assume that, in each $D_i$ with $i=+$ or $i=-$, the solution is smooth and thus satisfies the equation \eqref{problem}. Therefore, using integration by parts, we have
\begin{align*}
    &\int_{D_i} \left[\phi_t^T U  + \phi_{x}^T AU  + \frac{1}{\epsilon(x)} \phi^T QU \right] dxdt \\[2mm]
  = &\int_{D_i} \left[(\phi^T U)_t +  (\phi^T AU)_{x} - \phi^T U_t - \phi^T AU_{x} + \frac{1}{\epsilon(x)}\phi^T QU \right] dxdt\\[2mm]
  = &\int_{D_i} \left[(\phi^T U)_t + (\phi^T AU)_{x} \right] dxdt\\[2mm]
  = &-\int_{\partial D_i\cap \{x=0\}} \phi^T AU dt,
\end{align*}
and thus
$$
\int_{\partial D_{+}\cap \{x=0\}} \phi^T AU dt - \int_{\partial D_{-}\cap \{x=0\}} \phi^T AU dt = 0.
$$
Since $\phi$ is chosen arbitrarily, it follows that  
\begin{equation}\label{fluxcontinuity}
	\lim_{x\rightarrow 0^-}AU(x,t) = \lim_{x\rightarrow 0^+}AU(x,t).
\end{equation}
Thanks to the invertibility of $A$, we obtain the continuity condition
\begin{equation}\label{continuity}
	\lim_{x\rightarrow 0^-} U(x,t) = \lim_{x\rightarrow 0^+} U(x,t).
\end{equation}


\subsection{Coupling conditions}\label{section2.2}

In this part, we will derive the coupling conditions for the systems \eqref{leftproblem} and \eqref{rightproblem} from the continuity condition \eqref{continuity} with the aid of the theory of the  boundary conditions for the hyperbolic relaxation systems developed in \cite{ZY}.

For the convenience of readers, we firstly list the notations as follows:\\
\textbf{Numbers:}
\begin{itemize}
	\item $n\times n$: the size of $A$
	\item $m\times m$: the size of $S$
	\item $n^+/n^-$: the number of positive/negative eigenvalues of $A$
	\item $n_1^+/n_1^-/n_1^o$: the number of positive/negative/zero eigenvalues of $A_{11}$
\end{itemize}
\textbf{Characteristic decomposition:}
\begin{itemize}
	\item $\Lambda_+/\Lambda_-$: the diagonal matrix which represents the positive/negative eigenvalues of $A$. The size of $\Lambda_+$ is $n^+\times n^+$ and the size of $\Lambda_-$ is $n^-\times n^-$.

	\item $R = (R_+,R_-)$: the orthonormal eigenvectors of $A$ satisfying 
	\begin{equation}\label{eq:R+R-}
	R^TR=I_n,\qquad A(R_+,R_-)= (R_+,R_-)~\text{diag}(\Lambda_+,~\Lambda_-).
	\end{equation}
	The size of $R_+$ is $n\times n^+$ and the size of $R_-$ is $n\times n^-$.

	\item $\Lambda_{1+}/\Lambda_{1-}$: diagonal matrix which represents the positive/negative eigenvalues of $A_{11}$. The size of $\Lambda_{1+}$ is $n_1^+\times n_1^+$ and the size of $\Lambda_{1-}$ is $n_1^-\times n_1^-$.

	\item $P = (P_+,P_-,P_0)$: the orthonormal eigenvectors of $A_{11}$ satisfying 
	$$
	P^TP=I_{n-m},\qquad A_{11}(P_+,P_-,P_0)= (P_+,P_-,P_0)~\text{diag}(\Lambda_{1+},~\Lambda_{1-},~0).
	$$
	The sizes of $P_0$, $P_+$ and $P_-$ are $(n-m)\times n_1^o$, $(n-m)\times n_1^+$ and $(n-m)\times n_1^-$, respectively.
\end{itemize}

Clearly, it follows that $n=n^-+n^+$ and $n-m=n_1^++n_1^-+n_1^o$.
From the classical theory on IBVP for hyperbolic systems \cite{Benzoni,GKO}, we know that at $x=0$ the hyperbolic system \eqref{leftproblem} needs $n^-$ boundary conditions and \eqref{rightproblem} needs $n_1^+$ boundary conditions. 
In order to obtain such coupling conditions, we firstly try to construct an asymptotic solution to the original problem \eqref{problem}.

For $x>0$, the solution to \eqref{problem} allows boundary-layers when $\epsilon$ is sufficiently small. 
Motivated by the theory of boundary conditions for hyperbolic relaxation systems \cite{ZY}, we consider the following asymptotic solution to approximate the solution $U=U(x,t)$ to \eqref{problem} for $x>0$:
\begin{equation}\label{eq:boundary-layer-expansion}
\begin{pmatrix}
u^r\\[2mm]
v^r
\end{pmatrix}(x,t) + 
\begin{pmatrix}
\tilde{\mu} \\[2mm]
\tilde{\nu}
\end{pmatrix}(y,t) + \begin{pmatrix}
\hat{\mu} \\[2mm]
\hat{\nu}
\end{pmatrix}(z,t),\qquad y=\frac{x}{\epsilon},\quad z = \frac{x}{\sqrt{\epsilon}}.
\end{equation}
Here the outer solution $(u^r,v^r)$ satisfies the equation \eqref{rightproblem} while $(\hat{\mu},\hat{\nu})$ and $(\tilde{\mu},\tilde{\nu})$ are boundary-layer corrections. The continuity relation \eqref{continuity} indicates that
\begin{equation}\label{relation2.1}
U^l(0,t) = \begin{pmatrix}
u^r\\[2mm]
v^r
\end{pmatrix}(0,t) + 
\begin{pmatrix}
\tilde{\mu} \\[2mm]
\tilde{\nu}
\end{pmatrix}(0,t) + \begin{pmatrix}
\hat{\mu} \\[2mm]
\hat{\nu}
\end{pmatrix}(0,t).
\end{equation}
This  is our starting point to derive the coupling conditions for $U^l(0,t)$ and $u^r(0,t)$.


To determine the boundary-layer terms, we recall the theory developed in \cite{ZY}:
\begin{prop}[\cite{ZY}]\label{prop2.1}
The boundary-layer corrections in \eqref{eq:boundary-layer-expansion} are determined by
\begin{equation}\label{eq:boundary-layer-correction}
\begin{pmatrix}
\hat{\mu}\\[1mm]
\hat{\nu}
\end{pmatrix}=\begin{pmatrix}
P_0\\[1mm]
0
\end{pmatrix}\hat{w},\qquad
\begin{pmatrix}
\tilde{\mu}\\[1mm]
\tilde{\nu}
\end{pmatrix}=\begin{pmatrix}
N\\[1mm]
\tilde{K}
\end{pmatrix}\tilde{w},
\end{equation}
where $\hat{w}$ and $\tilde{w}$ satisfy the equations 
\begin{align*}
&\partial_t\hat{w} + \big(K^TS^{-1}K\big) \partial_{zz}\hat{w} = 0, \qquad \partial_y \tilde{w} = (\tilde{K}^TX\tilde{K})^{-1}(\tilde{K}^TS\tilde{K}) \tilde{w}.
\end{align*}
Here the matrices $K$, $\tilde{K}$, $X$ and $N$ are defined by
\begin{itemize}
	\item $K:=A_{12}^TP_0 \in\mathbb{R}^{m\times n_1^o}$
	\item $\tilde{K}\in \mathbb{R}^{m \times (m-n_1^o)}$: the orthogonal complement of $K$ satisfying $K^T\tilde{K}=0$
	\item $X := A_{22}-A_{12}^T \Big(P_+\Lambda_{1+}^{-1}P_+^T+P_-\Lambda_{1-}^{-1}P_-^T\Big)
	             A_{12} \in\mathbb{R}^{m\times m}$
	\item $N := -\Big(P_+\Lambda_{1+}^{-1}P_+^T+P_-\Lambda_{1-}^{-1}P_-^T\Big)A_{12}\tilde{K} $\\[1mm]
	$~~~~~~~~+ P_0 (K^TK)^{-1}\left[(K^TS\tilde{K})(\tilde{K}^TS\tilde{K})^{-1}(\tilde{K}^TX\tilde{K})-(K^TX\tilde{K})\right] \in\mathbb{R}^{(n-m)\times (m-n_1^o)}$.
\end{itemize}
\end{prop}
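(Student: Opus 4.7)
The plan is to insert the three-scale ansatz \eqref{eq:boundary-layer-expansion} into \eqref{problem} on $x>0$ (where $\epsilon(x)=\epsilon$), apply the chain rule $\partial_x\mapsto \partial_x+\epsilon^{-1/2}\partial_z+\epsilon^{-1}\partial_y$, and match coefficients scale-by-scale. Because the outer component $(u^r,v^r)$ automatically satisfies \eqref{rightproblem}, the task reduces to determining the two boundary-layer profiles from the residual hierarchy, exploiting the block structure $Q=\mathrm{diag}(0,S)$ with $S<0$ and the eigendecomposition of $A_{11}$ via $P=(P_+,P_-,P_0)$.

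For the fast layer $(\tilde\mu,\tilde\nu)(y,t)$, the leading $O(\epsilon^{-1})$ balance is $A\partial_y(\tilde\mu,\tilde\nu)^T=Q(\tilde\mu,\tilde\nu)^T$. Integrating the first block row from $y$ to $\infty$ using the decay-at-infinity assumption yields the algebraic constraint $A_{11}\tilde\mu+A_{12}\tilde\nu=0$, while the second row retains $A_{12}^T\tilde\mu_y+A_{22}\tilde\nu_y=S\tilde\nu$. Expanding $\tilde\mu=P_+a+P_-b+P_0c$ and projecting the first relation onto $P_+^T, P_-^T, P_0^T$ forces $a=-\Lambda_{1+}^{-1}P_+^TA_{12}\tilde\nu$, $b=-\Lambda_{1-}^{-1}P_-^TA_{12}\tilde\nu$, together with the compatibility $K^T\tilde\nu=0$, which gives $\tilde\nu=\tilde K\tilde w$. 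Substituting back yields $X\tilde K\tilde w_y+Kc_y=S\tilde K\tilde w$. Projecting onto $\tilde K^T$ annihilates $Kc_y$ (via $\tilde K^TK=0$) and isolates the announced ODE for $\tilde w$; projection onto $K^T$ yields an expression for $c_y$ which, integrated using the $\tilde w$-ODE and matched to decay at infinity, collapses algebraically to precisely the $P_0$-piece of $N$.

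For the parabolic layer $(\hat\mu,\hat\nu)(z,t)$ I would introduce a sub-expansion $\hat\mu=\hat\mu^{(0)}+\sqrt\epsilon\,\hat\mu^{(1)}+\cdots$ and likewise for $\hat\nu$. The $O(\epsilon^{-1})$ balance of the $v$-row forces $S\hat\nu^{(0)}=0$, hence $\hat\nu^{(0)}=0$; then the $O(\epsilon^{-1/2})$ balance of the $u$-row forces $A_{11}\hat\mu^{(0)}_z=0$, so $\hat\mu^{(0)}=P_0\hat w$ for some $\hat w(z,t)$. Next, the $O(\epsilon^{-1/2})$ balance of the $v$-row yields the subdominant correction $\hat\nu^{(1)}=S^{-1}K\hat w_z$, and the $O(1)$ balance of the $u$-row, projected onto $P_0^T$ (using $P_0^TA_{11}=0$ and $P_0^TA_{12}=K^T$), closes to the diffusion equation $\hat w_t+(K^TS^{-1}K)\hat w_{zz}=0$.

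The main obstacle I anticipate is the higher-order bookkeeping on the parabolic scale: one must introduce and carry the subdominant $\sqrt\epsilon$-corrections carefully and apply a Fredholm-type projection against $P_0^T$ to eliminate the non-kernel modes and extract exactly the coefficient $K^TS^{-1}K$. On the fast scale the arguments are largely algebraic once $K^T\tilde\nu=0$ is recognised, but a small matrix computation invoking $M^{-1}=(\tilde K^TS\tilde K)^{-1}(\tilde K^TX\tilde K)$, where $M=(\tilde K^TX\tilde K)^{-1}(\tilde K^TS\tilde K)$ is the exponent in $\tilde w_y=M\tilde w$, is still needed in order to identify the integrated $c$ with the precise $P_0$-contribution in the definition of $N$.
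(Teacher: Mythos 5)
Your derivation is correct, but note that the paper itself offers no proof of this proposition: it is quoted from \cite{ZY} and the proof is explicitly omitted, so the only meaningful comparison is with that reference, whose argument is precisely the formal two-layer matching you carry out. Your key steps all check out — the $O(\epsilon^{-1})$ balance on the $y$-scale giving $A_{11}\tilde\mu+A_{12}\tilde\nu=0$, hence $K^T\tilde\nu=0$ and $\tilde\nu=\tilde K\tilde w$, the $\tilde K^T$- and $K^T$-projections producing the stated ODE and (after integrating $c_y$ with $\int_y^\infty\tilde w\,dy'=-M^{-1}\tilde w$ and $M^{-1}=(\tilde K^TS\tilde K)^{-1}(\tilde K^TX\tilde K)$) the $P_0$-part of $N$, and the $P_0^T$-projection of the $O(1)$ balance on the $z$-scale yielding $\hat w_t+(K^TS^{-1}K)\hat w_{zz}=0$ — so your proposal reproduces the cited proof rather than offering a genuinely different route.
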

\noindent The detailed proof of this proposition can be found in \cite{ZY}. We omit the proof and only give some useful remarks.

\begin{remark}
Here the constant matrices $K$, $\tilde{K}$, $X$ and $N$ are uniquely determined and can be computed explicitly once the system \eqref{problem} is given. 
\end{remark}

\begin{remark}
If $A_{11}$ is invertible (i.e., the boundary is non-characteristic for the equilibrium system), there is no boundary-layer in the  scale of $x/\sqrt{\epsilon}$ and the equations for $(\tilde{\mu},\tilde{\nu})$ can be simplified as
$$
\tilde{\mu} = -A_{11}^{-1}A_{12}\tilde{\nu}, \qquad 
\partial_y \tilde{\nu} = (A_{22}-A_{12}^TA_{11}^{-1}A_{12})^{-1}S \tilde{\nu}.
$$
\end{remark}

\begin{remark}\label{rmk2.3}
It was proved in \cite{ZY} that $K^TS^{-1}K$ is negative definite and thereby $n_1^o$ boundary conditions should be prescribed for $\hat{w}$ at $z=0$. On the other hand, by the classical theory of ODEs, $\tilde{w}(0,t)$ should be given on the stable-manifold $\text{span}\{R_S\}$ where $R_S$ is the right-stable matrix (see the definition in Appendix \ref{appedix:GKC}) of $(\tilde{K}^TX\tilde{K})^{-1}(\tilde{K}^TS\tilde{K})$. From \cite{ZY}, we know that this matrix has $(n^+-n_1^o-n_1^+)$ negative eigenvalues and thereby the size of $R_S$ is $(m-n_1^o) \times (n^+-n_1^o-n_1^+)$.
\end{remark}

Thanks to Proposition \ref{prop2.1}, the boundary-layer corrections are uniquely determined by $\hat{w}$ and $\tilde{w}$, which satisfy a parabolic PDE and an ODE, respectively. From Remark \ref{rmk2.3}, we know that $\tilde{w}$ is on the stable-manifold $\text{span}\{R_S\}$ and thus it can be expressed as
\begin{equation}\label{eq:tilde-w}
\tilde{w}=R_S\tilde{w}^S.    
\end{equation}
Substituting \eqref{eq:boundary-layer-correction} and \eqref{eq:tilde-w} into the relation \eqref{relation2.1} yields
\begin{equation}\label{relation2.6}
U^l(0,t) = 
\begin{pmatrix}
u^r(0,t)\\[2mm]
0 
\end{pmatrix} +
\begin{pmatrix}
P_0\hat{w}(0,t) \\[2mm]
0
\end{pmatrix}
+ 
\begin{pmatrix}
NR_S \\[2mm]
\tilde{K}R_S
\end{pmatrix}\tilde{w}^S(0,t).
\end{equation}
By exploiting the characteristic decomposition, we can express $U^l(0,t)$ as the characteristic variables $\alpha_+$ and $\alpha_-$:
$$
U^l(0,t) = R_+ \alpha_+ + R_- \alpha_-,
$$
where $\alpha_+$ and $\alpha_-$ have $n^+$ and $n^-$ components, respectively.
Similarly, we introduce the characteristic decomposition for $u^r$:
$$
u^r(0,t) = P_+ \beta_{+}+ P_- \beta_{-} + P_0 \beta_0.
$$
Here $\beta_+$, $\beta_-$ and $\beta_0$ have $n_1^+$, $n_1^-$ and $n_1^o$ components, respectively.
Motivated by the classical theory on IBVP for hyperbolic systems \cite{GKO}, our goal is to express the incoming characteristic variable $\alpha_-$ (w.r.t. the left domain) and $\beta_+$ (w.r.t. the right domain), in terms of the knowns $\alpha_+$, $\beta_-$ and $\beta_0$.
Accordingly, we rewrite \eqref{relation2.6} as 
\begin{equation}\label{coco}
\begin{pmatrix}
-R_-, & 
\begin{pmatrix}
P_+ \\[2mm]
0
\end{pmatrix}, &\begin{pmatrix}
P_0 \\[2mm]
0
\end{pmatrix}, &
\begin{pmatrix}
NR_S \\[2mm]
\tilde{K}R_S
\end{pmatrix}
\end{pmatrix}
\begin{pmatrix}
\alpha_- \\[2mm]
\beta_+ \\[2mm]
\hat{w} \\[2mm]
\tilde{w}^S
\end{pmatrix} = R_+ \alpha_+ - \begin{pmatrix}
P_- \beta_- \\[2mm]
0
\end{pmatrix} - \begin{pmatrix}
P_0 \beta_0 \\[2mm]
0
\end{pmatrix}.
\end{equation}
This is an linear system which consists of $n$ equations and the number of unknowns is $n^-+n_1^++n_1^o+(n^+-n_1^o-n_1^+)=n$. 
The main result of this section is 
\begin{theorem}\label{coth}
Under the structural stability condition and the non-characteristic assumption $\det(A)\neq 0$, 
the linear system \eqref{coco} is uniquely solvable. Moreover, the coupling conditions for \eqref{leftproblem} and \eqref{rightproblem} can be expressed as
\begin{equation}\label{eq:coupling-condition-B}
\alpha_- = B_{l,l} \alpha_+ + B_{l,r} \beta_-, \qquad
\beta_+ = B_{r,r} \beta_- + B_{r,l} \alpha_+,
\end{equation}
Here $B_{l,l},B_{l,r},B_{r,r}$ and $B_{r,l}$ are constant matrices of size $n^-\times n^+$, $n^-\times n_1^+$, $n_1^+\times n_1^+$ and $n_1^+\times n^+$, respectively.
\end{theorem}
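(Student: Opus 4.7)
The statement reduces to showing that the $n\times n$ coefficient matrix $M$ on the left-hand side of \eqref{coco} is invertible; then \eqref{eq:coupling-condition-B} follows by reading off the rows of $M^{-1}$. A dimension count confirms the shape: the four column blocks contribute $n^-$, $n_1^+$, $n_1^o$, and $(n^+-n_1^o-n_1^+)$ columns, summing to $n^-+n^+=n$, so $M$ is square.

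The main obstacle is showing $\ker M=\{0\}$. Splitting $R_-=(R_-^u;R_-^v)^T$ according to the partition $U=(u,v)$, a kernel vector $(\alpha_-,\beta_+,\hat w,\tilde w^S)$ must satisfy
\[
R_-^u\alpha_- \;=\; P_+\beta_+ + P_0\hat w + NR_S\tilde w^S,\qquad R_-^v\alpha_- \;=\; \tilde K R_S\tilde w^S.
\]
The plan is to evaluate the $A$-quadratic form of $\Phi:=R_-\alpha_-$ in two different ways. On one hand, the characteristic decomposition \eqref{eq:R+R-} yields $\langle\Phi,A\Phi\rangle=\alpha_-^T\Lambda_-\alpha_-\leq 0$, with equality only if $\alpha_-=0$. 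On the other hand, expanding $\langle\Phi,A\Phi\rangle$ via the block structure of $A$ and substituting the two kernel identities, I expect the definitions of $X$ and $N$ in Proposition~\ref{prop2.1}, together with the orthogonality $K^T\tilde K=0$, to reorganize the result as a sum of nonnegative quadratic forms: a $\beta_+^T\Lambda_{1+}\beta_+\geq 0$ contribution, a $\hat w$-contribution governed by $-K^TS^{-1}K>0$, and a $\tilde w^S$-contribution controlled by the dissipativity of $(\tilde K^TX\tilde K)^{-1}(\tilde K^TS\tilde K)$ on its stable manifold (Remark~\ref{rmk2.3}). Matching the two evaluations forces everything to vanish, so $\alpha_-=0$ and then $\beta_+=\hat w=\tilde w^S=0$ by linear independence of $P_+$, $P_0$ and $\tilde K R_S$. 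The non-characteristic assumption $\det A\neq 0$ enters precisely to ensure that $R=(R_+,R_-)$ spans $\mathbb{R}^n$, making the above characteristic decomposition available; the identities that make the quadratic-form matching work are exactly those already used in \cite{ZY} to verify the Generalized Kreiss Condition for the half-space problem.

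Once $M^{-1}$ exists, solving \eqref{coco} expresses $\alpha_-,\beta_+,\hat w,\tilde w^S$ linearly in $(\alpha_+,\beta_-,\beta_0)$. A structural observation eliminates $\beta_0$ from the coupling conditions: the column block $(P_0;0)^T$ multiplying $\hat w$ on the left of \eqref{coco} is identical to the column block $(P_0;0)^T$ multiplying $-\beta_0$ on the right, so the substitution $\hat w\mapsto\hat w+\beta_0$ removes $\beta_0$ from the system entirely. Consequently $\alpha_-$ and $\beta_+$ are linear functions of $(\alpha_+,\beta_-)$ alone, and reading off the rows of $M^{-1}$ corresponding to $\alpha_-$ and $\beta_+$ applied to the reduced right-hand side $R_+\alpha_+-(P_-\beta_-;0)^T$ yields the constant matrices $B_{l,l},B_{l,r},B_{r,l},B_{r,r}$ of the asserted sizes, completing \eqref{eq:coupling-condition-B}.
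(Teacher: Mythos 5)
Your proposal is correct in outline, and for the solvability part it takes a genuinely different route from the paper. The paper never works with the kernel directly: it identifies the coefficient matrix of \eqref{coco} as $(-R_-,\,R_M^S(1,\infty))$, invokes the Generalized Kreiss Condition for $B=R_+^T$ (checked in Appendix \ref{appedix:GKC} via the strictly dissipative condition, and relying on \cite{ZY} for the limit form of the right-stable matrix), deduces that $R_+^TR_M^S(1,\infty)$ is invertible, and concludes from the block-triangular product with $(R_-^T;R_+^T)$. Your route replaces the GKC/determinant argument by a direct quadratic-form computation at the single point $(\xi,\eta)=(1,\infty)$, which is more self-contained (no uniform Kreiss bound, no limit $\eta\to\infty$), though both routes rest on Proposition \ref{prop2.1} for the explicit boundary-layer columns. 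For the second half, your observation that the column $(P_0;0)^T$ on the left of \eqref{coco} coincides with the $P_0\beta_0$ term on the right, so the shift $\hat w\mapsto\hat w+\beta_0$ removes $\beta_0$, is a cleaner path to \eqref{eq:coupling-condition-B} than the paper's construction of an annihilating full-row-rank matrix $B_L$, and it gives the stated block sizes immediately.

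The soft spot is that your pivotal claim—that $\Phi^TA\Phi$ restricted to the span of the last three column blocks is a sum of nonnegative forms—is announced as an expectation rather than computed, and as stated it is slightly off. Carrying out the computation with $A_{11}P_0=0$, $P_0^TA_{12}\tilde{K}=K^T\tilde{K}=0$ and the identity $A_{11}N=-A_{12}\tilde{K}$ (which follows from the definition of $N$), the cross terms cancel and one obtains
\[
\Phi^TA\Phi=\beta_+^T\Lambda_{1+}\beta_+ + (R_S\tilde w^S)^T\bigl(\tilde{K}^TX\tilde{K}\bigr)(R_S\tilde w^S),
\]
so the $\hat w$-contribution is identically zero, not a positive form governed by $-K^TS^{-1}K$; this is harmless because, as you note, $\hat w=0$ is recovered at the end from linear independence of the columns. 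You do, however, still need the fact that $\tilde{K}^TX\tilde{K}$ is positive definite on $\text{span}\{R_S\}$; this is true but is not contained in Remark \ref{rmk2.3}, and must be proved (e.g. by integrating $\frac{d}{dy}\bigl(\tilde w^T\tilde{K}^TX\tilde{K}\tilde w\bigr)=2\tilde w^T\tilde{K}^TS\tilde{K}\tilde w<0$ along decaying solutions of the layer ODE) or cited from \cite{ZY}. Finally, $\det A\neq0$ is needed so that $\Lambda_-$ is strictly negative and $n^++n^-=n$ (and for Proposition \ref{prop2.1} itself), not merely so that $R$ spans $\mathbb{R}^n$, which is automatic for symmetric $A$. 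With these points supplied, your argument is a valid alternative proof of Theorem \ref{coth}.
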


\begin{proof}
We will prove the theorem by resorting to the theory of boundary conditions for hyperbolic relaxation systems \cite{ZY}.  
In Appendix \ref{appedix:GKC}, we briefly review the  Generalized Kreiss condition (GKC) and show the following two conclusions: 1) the matrix $R_+^T$ satisfies the GKC; 2) the coefficient matrix on the left hand side of \eqref{coco} can be written as $(-R_-,~R_M^S(1,\infty))$ where 
$$
R_M^S(1,\infty)=
\begin{pmatrix}
P_+ & P_0 & NR_S\\[1mm]
0 & 0 & \tilde{K}R_S
\end{pmatrix}
$$
is the matrix $R_M^S(\xi,\eta)$ in the definition of GKC with $\xi=1$ and $\eta = \infty$ (See Appendix \ref{appedix:GKC} for details). 
The GKC implies that $R_+^TR_M^S(1,\infty)$ is invertible. Then the solvability of \eqref{coco} follows from the relation
\begin{equation*} 
\begin{pmatrix}
R_-^T \\[2mm]
R_+^T
\end{pmatrix} \Big(-R_-, ~R_M^S(1,\infty)\Big) = 
\begin{pmatrix}
-I~ & R_-^T R_M^S(1,\infty) \\[2mm]
~0~ & R_+^T R_M^S(1,\infty)
\end{pmatrix}.
\end{equation*}


Since the matrix $(-R_-,~R_M^S(1,\infty))$ is invertible, there exists a full-row rank matrix $B_L$ satisfying 
$$
B_L\begin{pmatrix}
-R_-, & 
\begin{pmatrix}
P_+ \\[2mm]
0
\end{pmatrix}
\end{pmatrix}~\text{is invertible and}~
B_L
\begin{pmatrix}
P_0 & NR_S\\[2mm]
0 & \tilde{K}R_S
\end{pmatrix}=0.
$$
Multiplying $B_L$ on the both sides of \eqref{coco}, we can eliminate $P_0\beta_0$ on the right hand side and thus express $(\alpha_-,\beta_+)$ in terms of $(\alpha_+,\beta_-)$. Namely, 
$$
\alpha_- = B_{l,l} \alpha_+ + B_{l,r} \beta_-, \qquad
\beta_+ = B_{r,r} \beta_- + B_{r,l} \alpha_+
$$
with $B_{l,l}, B_{l,r}, B_{r,r}, B_{r,l}$ constant matrices. 
\end{proof}

\begin{remark}
    The matrices $B_{l,l}, B_{l,r}, B_{r,r}, B_{r,l}$ in the coupling condition \eqref{eq:coupling-condition-B} could be explicitly computed once the hyperbolic relaxation system \eqref{problem} is given. We will show how to express the explicit coupling condition in two physical models in Section \ref{sec:numerical-example}.
\end{remark}

\begin{remark}
Note that the coupling condition \eqref{eq:coupling-condition-B} in Theorem \ref{coth} is independent of the zero characteristic variables $\beta_0$.
\end{remark}

\subsection{Validity}
In this part, we first prove the existence of the solution to the original problem \eqref{problem} and show the $L^2$ stability. Then, we show the validity of the derived coupling condition by the error estimate between the solution to the original problem and that to the interface problem \eqref{leftproblem}-\eqref{rightproblem}-\eqref{eq:coupling-condition-B}.

\begin{theorem}
Under the assumptions in Theorem \ref{coth} and the assumption that the initial value of \eqref{problem} satisfies $U_0\in H^1(\mathbb{R})$, there exists a solution $U^\epsilon=U^\epsilon(x,t)$ to the initial-value problem of \eqref{problem} which satisfies
\begin{align*}
\|U^\epsilon(\cdot,t)\|_{L^2(\mathbb{R})} \leq \|U^\epsilon(\cdot,0)\|_{L^2(\mathbb{R})}
\end{align*}
for any $\epsilon>0$ and $t>0$.
\end{theorem}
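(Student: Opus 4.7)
The plan is two-step: first derive the $L^2$ energy inequality as an a priori bound, assuming the regularity needed for the computations; then establish existence by mollifying the discontinuous coefficient $\epsilon(x)$ and passing to the limit using the uniform bound.

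For the a priori estimate, by the structural stability condition I may take $A$ symmetric and $Q=\mathrm{diag}(0,S)$ with $S<0$. Multiplying \eqref{problem} by $(U^\epsilon)^T$ and using the symmetry of $A$ yields
\begin{equation*}
\tfrac{1}{2}\bigl(|U^\epsilon|^2\bigr)_t + \tfrac{1}{2}\bigl((U^\epsilon)^T A\,U^\epsilon\bigr)_x = \tfrac{1}{\epsilon(x)}\,(U^\epsilon)^T Q\,U^\epsilon.
\end{equation*}
Integrating in $x$, the flux term splits as $\int_{-\infty}^{0^-}+\int_{0^+}^{\infty}$ and produces a boundary contribution at $x=0$ equal to $\tfrac{1}{2}\bigl[(U^T A U)(0^-,t)-(U^T A U)(0^+,t)\bigr]$. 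The continuity condition \eqref{continuity} makes this jump vanish, while the source is nonpositive since $(U^\epsilon)^T Q\,U^\epsilon = v^T S v \le 0$ and $\epsilon(x)>0$. Hence $\tfrac{d}{dt}\|U^\epsilon\|_{L^2}^2 \le 0$, and integration in $t$ gives the claimed inequality.

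For existence, the jump of $\epsilon(x)$ at $x=0$ rules out a direct appeal to the standard symmetric hyperbolic theory. I would regularize by replacing $1/\epsilon(x)$ with a smooth family $\rho_\delta(x)$ satisfying $1\le \rho_\delta \le 1/\epsilon$ and $\rho_\delta \to 1/\epsilon$ pointwise a.e.~as $\delta\to 0^+$. The Friedrichs--Kato theory for smooth-coefficient symmetric hyperbolic systems then yields unique solutions $U^{\epsilon,\delta}\in C([0,T];H^1(\mathbb{R}))$ from the initial datum $U_0\in H^1$, and the energy argument above applies to each $U^{\epsilon,\delta}$ (with no boundary contribution, since the coefficient is now smooth) to give the uniform bound $\|U^{\epsilon,\delta}(\cdot,t)\|_{L^2}\le \|U_0\|_{L^2}$.

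The final and most delicate step is to extract a weak-$*$ limit $U^\epsilon$ in $L^\infty(0,T;L^2(\mathbb{R}))$ and verify that it is a weak solution of \eqref{problem} in the sense of \eqref{eq:def-weak-solution}. By linearity, the first two integrands in \eqref{eq:def-weak-solution} pass to the limit against any fixed test function $\phi$ by weak-$*$ convergence; for the source term it suffices that $\rho_\delta(x)\phi\to \epsilon(x)^{-1}\phi$ in $L^1_{\mathrm{loc}}$, which follows from dominated convergence on the compact support of $\phi$. The continuity condition \eqref{continuity} then emerges automatically from the arguments of Section 2.1 applied to the limit, so $U^\epsilon$ is the sought weak solution and inherits the $L^2$ stability in the limit. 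The main obstacle is organizing these limiting steps so that all three integrands converge simultaneously against arbitrary admissible $\phi$; everything else is essentially bookkeeping on top of the clean a priori estimate.
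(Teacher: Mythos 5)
Your energy estimate is exactly the paper's (flux cancellation at $x=0$ via \eqref{continuity} plus nonpositivity of $Q$), but your existence argument takes a genuinely different route, and it has a gap precisely where the paper works hardest. The paper does not mollify: it folds the line into the half-space by setting $U_1(x,t)=U(-x,t)$, $U_2(x,t)=U(x,t)$, obtains a half-space IBVP for the doubled system $\mathrm{diag}(-A,A)$ with the interface continuity imposed as the boundary condition $U_1(0,t)=U_2(0,t)$, checks the uniform Kreiss condition $\det(R_-,-R_+)\neq 0$ and the compatibility of $U_0\in H^1$, and then invokes classical IBVP theory to get a solution that is $C([0,T];H^1)$ on each side of the interface \emph{with the continuity condition built in}. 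That extra structure is not decorative: the matching one-sided traces are what make the energy estimate rigorous, and they are used again in the proof of Theorem \ref{thm2.3}, where $W_L(0,t)-W_R(0,t)$ must make pointwise sense in the IBVP \eqref{error-IBVP}.

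The gap in your proposal is the sentence claiming that \eqref{continuity} ``emerges automatically from the arguments of Section 2.1 applied to the limit.'' Those arguments presuppose a solution that is smooth on each side of $x=0$ and possesses one-sided traces; a weak-$*$ limit in $L^\infty(0,T;L^2)$ obtained from your mollified problems has no such traces a priori, so neither your Step 1 energy identity nor the continuity condition is justified for the limit object. (One can recover weak normal traces of $AU$ from the equation because $A$ is invertible, but that requires an argument you do not give, and it still falls short of the piecewise-$H^1$ regularity the paper needs downstream.) Two smaller points: evaluating $\|U^\epsilon(\cdot,t)\|_{L^2}$ for every $t$ and identifying $U^\epsilon(\cdot,0)=U_0$ requires a weak-continuity-in-time argument (e.g.\ a uniform bound on $\partial_t U^{\epsilon,\delta}$ in $H^{-1}$), since the weak-$*$ limit is only defined for a.e.\ $t$ and \eqref{eq:def-weak-solution} uses test functions supported in $t>0$; and the limit passage in the source term needs strong $L^2$ convergence of $\rho_\delta\phi$ against the weakly convergent $U^{\epsilon,\delta}$, not merely $L^1_{\mathrm{loc}}$ convergence (dominated convergence does give this, so the fix is cosmetic). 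Finally, note that the mollification is not really where the difficulty lies: the discontinuity sits only in the bounded zeroth-order coefficient $\epsilon(x)^{-1}Q$, so existence and uniqueness in $C([0,T];L^2)$ follow directly from bounded perturbation of the constant-coefficient semigroup. What genuinely requires work is the interface regularity, and that is exactly what the paper's folded-IBVP/Kreiss-condition construction delivers and your compactness argument does not.
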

\begin{proof}
We first construct an initial-boundary value problem in the half plane $\{(x,t)\in\mathbb{R}^2 | x\ge0, t\ge0 \}$:
\begin{eqnarray*}
\left\{{\begin{array}{*{20}l}
\begin{pmatrix}
U_1 \\[2mm]
U_2
\end{pmatrix}_t + 
\begin{pmatrix}
-A & 0 \\[2mm]
0 & A
\end{pmatrix}
\begin{pmatrix}
U_1 \\[2mm]
U_2
\end{pmatrix}_x = 
\begin{pmatrix}
Q & 0 \\[2mm]
0 & Q/\epsilon
\end{pmatrix}
\begin{pmatrix}
U_1 \\[2mm]
U_2
\end{pmatrix},\qquad \\[8mm]
\begin{pmatrix}
U_1 \\[2mm]
U_2
\end{pmatrix}(x,0) = \begin{pmatrix}
U_0(-x) \\[2mm]
U_0(x)
\end{pmatrix},\\[8mm]
~U_1(0,t) - U_2(0,t)=0.
\end{array}}\right.
\end{eqnarray*}
It is easy to check that, for the above problem, the Kreiss condition \cite{GKO} 
$$
\det\left\{(I,-I)\begin{pmatrix}
R_- & 0\\[2mm]
0 & R_+
\end{pmatrix}\right\} = \det(R_-,-R_+)\neq 0
$$
holds where $\text{diag}(R_-,R_+)$ is the right-unstable matrix for the coefficient matrix $\text{diag}(-A,A)$.
Moreover, since $U_0\in H^1(\mathbb{R})$, it is continuous at $x=0$. It is easy to verify that the initial data are compatible with the boundary conditions, i.e.,
$$
U_1(0,0) = U_0(0) = U_2(0,0).
$$
According to the classical theory of hyperbolic systems \cite{Benzoni}, there exists a solution $(U_1,U_2)\in C([0,T];H^1(\mathbb{R}^+))\cap C^1([0,T];L^2(\mathbb{R}^+))$. Define the function $U^\epsilon = U^\epsilon(x,t)$:
$$
U^\epsilon(x,t) =\left\{ 
\begin{array}{l}
U_1(-x,t), \qquad x<0, \\[3mm]
U_2(x,t),~~\qquad x\ge0.
\end{array}\right.
$$
Then $U^\epsilon$ is a solution to the problem \eqref{problem} and also satisfies the continuity condition \eqref{continuity}. 

It remains to show the $L^2$ stability estimate. To do this, we multiply $(U^\epsilon)^T$ on both sides of  \eqref{problem}
and integrate over $x\in\mathbb{R}$ to obtain
\begin{align*}
&\frac{d}{dt}\|U^\epsilon(\cdot,t)\|_{L^2(\mathbb{R})}^2 + \int_{-\infty}^{0}[U_1^T(-x)AU_1(-x)]_x dx + \int_{0}^{+\infty} (U^T_2AU_2)_x dx \\[2mm]
\leq ~& \int_{-\infty}^{0} U_1^T(-x)QU_1(-x)dx + \frac{1}{\epsilon}\int_0^{+\infty} U_2^TQU_2dx.
\end{align*} 
By the continuity condition \eqref{continuity} and the non-positiveness of $Q$, we have 
\begin{align*}
&\frac{d}{dt}\|U^\epsilon(\cdot,t)\|_{L^2(\mathbb{R})}^2 \leq 0
\end{align*} 
and thereby obtain the estimate in the theorem.
\end{proof}

Next, we show the validity of the derived coupling condition by estimating the difference between the solution to the original problem \eqref{problem} and the solution to the coupling problem denoted by: 
\begin{equation}\label{eq:U-D}
U^D(x,t) = 
\left\{
\begin{array}{l}
U^l(x,t),\qquad x<0,\\[3mm]
U^r(x,t),\qquad x>0.
\end{array}\right.
\end{equation}
\begin{theorem}\label{thm2.3}
Suppose that the assumptions in Theorem \ref{coth} hold and the initial data to the original problem \eqref{problem} is in the equilibrium on $x>0$. Then the solution to \eqref{problem} and the solution to the coupling system \eqref{leftproblem}-\eqref{rightproblem}-\eqref{eq:coupling-condition-B} satisfy the following error estimate:
$$
\|U^\epsilon(\cdot,t)-U^D(\cdot,t)\|_{L^2(\mathbb{R})} \leq C(T) \epsilon^{1/4},\qquad t\in[0,T].
$$
\end{theorem}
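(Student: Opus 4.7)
The plan is to insert the asymptotic expansion of Proposition~\ref{prop2.1} between $U^\epsilon$ and $U^D$, so that by the triangle inequality it suffices to (i) bound the boundary-layer corrections in $L^2$, and (ii) bound the residual of the expansion as a solution of the original system via the energy estimate already established.

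First I would define the composite approximation
\begin{equation*}
U^A(x,t)=
\begin{cases}
U^l(x,t), & x<0,\\[1mm]
\begin{pmatrix} u^r \\ 0 \end{pmatrix}(x,t)
+\begin{pmatrix}\tilde\mu\\ \tilde\nu\end{pmatrix}\!\Big(\tfrac{x}{\epsilon},t\Big)
+\begin{pmatrix}\hat\mu\\ 0\end{pmatrix}\!\Big(\tfrac{x}{\sqrt\epsilon},t\Big), & x>0,
\end{cases}
\end{equation*}
where $(u^r,v^r)$ solves \eqref{rightproblem} with the coupling condition \eqref{eq:coupling-condition-B}, and the layer profiles are those of \eqref{eq:boundary-layer-correction} driven by appropriate initial/boundary data for $\hat w,\tilde w$. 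By the very derivation in Section~\ref{section2.2}, the relation \eqref{relation2.1} holds, so $U^A$ is \emph{continuous across} $x=0$; moreover $U^A$ agrees with $U^D$ modulo the layer corrections.

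Next I would estimate $\|U^A-U^D\|_{L^2(\mathbb{R})}$ by rescaling. The fast layer $(\tilde\mu,\tilde\nu)(x/\epsilon,t)$ decays exponentially in $y$ since $\tilde w$ lives on the stable manifold of an ODE (Remark \ref{rmk2.3}), giving an $L^2$-contribution of order $\sqrt{\epsilon}$. The slow layer $\hat\mu(x/\sqrt\epsilon,t)=P_0\hat w(x/\sqrt\epsilon,t)$ solves a (negative-definite) heat equation on the half-line, so standard parabolic energy bounds yield $\|\hat w(\cdot,t)\|_{L^2(0,\infty)}\le C(T)$, hence after the $\sqrt\epsilon$ rescaling
\begin{equation*}
\|\hat\mu(\cdot/\sqrt\epsilon,t)\|_{L^2(0,\infty)} \le C(T)\,\epsilon^{1/4}.
\end{equation*}
Together these give $\|U^A-U^D\|_{L^2}\le C(T)\epsilon^{1/4}$, which is already the claimed rate and identifies the parabolic layer as the bottleneck.

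It then remains to control $E:=U^\epsilon-U^A$. Plugging $U^A$ into \eqref{problem} gives, after the cancellations built into Proposition~\ref{prop2.1},
\begin{equation*}
E_t+AE_x=\tfrac{1}{\epsilon(x)}QE-R(x,t),\qquad x\ne 0,
\end{equation*}
together with the continuity condition $E(0^-,t)=E(0^+,t)$ (both $U^\epsilon$ and $U^A$ are continuous at the interface). The residual $R$ comes from the mismatched terms in the expansion—most importantly $A\partial_x$ acting on the outer part inside the layer region and the higher-order $\epsilon$-corrections to the layer equations. A direct computation shows $R$ is supported essentially in the layer and satisfies $\|R(\cdot,t)\|_{L^2(\mathbb{R})}\le C(T)\epsilon^{1/4}$ after the same rescaling argument. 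With the equilibrium initial data on $x>0$ and the compatibility of $U^A$ at $t=0$, the initial error $\|E(\cdot,0)\|_{L^2}$ is also $O(\epsilon^{1/4})$. Applying the $L^2$ stability proved in the previous theorem to the inhomogeneous transmission problem (the interface term vanishes thanks to the continuity of $E$ and the non-positivity of $Q$, exactly as in the proof of that theorem) yields
\begin{equation*}
\|E(\cdot,t)\|_{L^2}\le \|E(\cdot,0)\|_{L^2}+\int_0^t\|R(\cdot,s)\|_{L^2}\,ds\le C(T)\epsilon^{1/4}.
\end{equation*}
The triangle inequality then closes the estimate.

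The main obstacle I anticipate is the residual analysis for $R$: the boundary-layer profiles in Proposition~\ref{prop2.1} only solve \emph{reduced} (parabolic/ODE) equations, so one must carefully track the leftover terms—particularly the coupling between the two layer scales and the transversal derivatives $\partial_t$ acting on the slow layer—and show that they are, after the $y=x/\epsilon$ or $z=x/\sqrt\epsilon$ rescaling, bounded by $\epsilon^{1/4}$ in $L^2$. If controlling $R$ in $L^2$ directly turns out to be delicate because of time derivatives of the layers near $t=0$, the Laplace-transform approach foreshadowed in the abstract provides an alternative: transform in $t$, apply the GKC bound used in Theorem~\ref{coth} to invert the interface operator uniformly in the dual variable $\eta$, and recover the $L^2$-in-time error estimate by Parseval.
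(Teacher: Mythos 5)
There is a genuine gap: your residual estimate, which is the heart of the argument, is false as stated, and the stability mechanism you invoke cannot repair it. Plugging the outer part $(u^r,0)$ of $U^A$ into \eqref{problem} leaves the residual $(0,\,A_{12}^T\partial_x u^r)$, which is $O(1)$ in $L^2(\mathbb{R}^+)$ and is spread over the whole right domain, not ``supported essentially in the layer''; likewise, since the parabolic profile $(P_0\hat w,0)$ only solves the reduced heat equation, the term $\epsilon^{-1/2}A_{12}^TP_0\partial_z\hat w(x/\sqrt\epsilon,t)$ survives in the relaxed block with $L^2_x$ norm of order $\epsilon^{-1/4}$. Consequently $\|R(\cdot,t)\|_{L^2}\le C\epsilon^{1/4}$ fails, and the plain Duhamel bound $\|E(t)\|\le\|E(0)\|+\int_0^t\|R\|$ based on the previous theorem's $L^2$ stability gives nothing. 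The point you are missing is structural: these large residuals live only in the \emph{relaxed} components, so they must be absorbed by the stiff dissipation $\tfrac1\epsilon\int E^TQE$ in an energy estimate (costing only $\epsilon\|R_2\|^2$), while the residual in the conserved components has to be made genuinely small. This is exactly why the paper augments the expansion with the next-order corrector $\sqrt\epsilon(\hat\mu_1,\hat\nu_1)$ from \cite{ZY}, achieving $\|R_1\|^2_{L^2([0,T]\times\mathbb{R}^+)}\le C\epsilon$ and $\|R_2\|^2\le C$, and then runs the weighted energy argument of its Step 2.

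The corrector, in turn, destroys exact continuity at the interface (a jump $-\sqrt\epsilon\,g(t)$ appears), and the paper handles this boundary-driven part of the error by folding the two half-lines into one IBVP, splitting the error into \eqref{error-eq1} (energy method with dissipation absorption, plus boundary-trace bounds) and \eqref{error-eq2} (homogeneous equation with $O(\sqrt\epsilon)$ boundary data, estimated via the GKC and the Laplace-transform result of Proposition \ref{prop:appendix-estimate}). In your proposal the Laplace transform is only a fallback remark, whereas in the paper it is an essential step. Your treatment of $\|U^A-U^D\|$ by rescaling the two layers ($\epsilon^{1/4}$ from the $\sqrt\epsilon$-scale layer, $\epsilon^{1/2}$ from the $\epsilon$-scale layer) does match the paper's Step 4 and is correct. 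A version of your corrector-free plan could conceivably be salvaged — keeping exact continuity and redoing the energy estimate with the relaxed/non-relaxed splitting, where the $O(\epsilon^{-1/4})$ and $O(1)$ relaxed-block residuals cost only $O(\sqrt\epsilon)$ after absorption — but that is a different argument from the one you wrote, and as written the proposal does not close.
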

\begin{proof}
\textbf{Step 1:}
Denote the approximate solution $U_\epsilon$ as 
$$
U_\epsilon = 
\left\{
\begin{array}{l}
U^l, \qquad x<0,\\[3mm]
\begin{pmatrix}
u^r\\[1mm]
v^r
\end{pmatrix}+\begin{pmatrix}
\hat{\mu}\\[1mm]
\hat{\nu}
\end{pmatrix}+\begin{pmatrix}
\tilde{\mu}\\[1mm]
\tilde{\nu}
\end{pmatrix}+\sqrt{\epsilon}\begin{pmatrix}
\hat{\mu}_1\\[1mm]
\hat{\nu}_1
\end{pmatrix}
,\qquad x>0.
\end{array}\right.
$$
Here $(u^r, v^r)$ satisfy \eqref{rightproblem}. The boundary-layer terms $(\hat{\mu},\hat{\nu})$ and $(\tilde{\mu},\tilde{\nu})$ are constructed in Proposition \ref{prop2.1}. Note that the high-order terms $(\hat{\mu}_1,\hat{\nu}_1)$ are included such that $U_\epsilon$ approximately satisfies the original problem \eqref{problem} for $x>0$. In fact, it was shown in \cite{ZY} that $(\hat{\mu}_1,\hat{\nu}_1)$ can be constructed properly such that $U_\epsilon$ satisfies
$$
\partial_tU_\epsilon + A\partial_xU_\epsilon = \frac{1}{\epsilon}QU_\epsilon + R_1 + 
\begin{pmatrix}
0\\[1mm]
R_2
\end{pmatrix},\qquad x>0,
$$
where the residuals $R_1$ and $R_2$ are bounded by
\begin{equation}\label{proof:residuals}
	\|R_1\|^2_{L^2([0,T]\times \mathbb{R}^+)} \leq C_1 \epsilon,\qquad \|R_2\|^2_{L^2([0,T]\times \mathbb{R}^+)} \leq C_2.
\end{equation}
Here $C_1$ and $C_2$ are two positive constants which are independent of $\epsilon$.
On the left domain $x<0$, it is clear that $U_\epsilon$ satisfies the equation \eqref{leftproblem}.
By the continuity condition \eqref{relation2.1}, we have
$$
\lim_{x\rightarrow 0^-} U_\epsilon(x,t)=\lim_{x\rightarrow 0^+} U_\epsilon(x,t) - \sqrt{\epsilon}\begin{pmatrix}
\hat{\mu}_1\\[1mm]
\hat{\nu}_1
\end{pmatrix}(0,t).
$$

Introduce the error functions $W_L=W_L(x,t)$ and $W_R=W_R(x,t)$ for $x>0$:
\begin{equation}\label{eq:error-W}
    W_L(x,t) = U^\epsilon(-x,t) - U_\epsilon(-x,t), \quad
    W_R(x,t) = U^\epsilon(x,t) - U_\epsilon(x,t).
\end{equation}
Since the initial data on $x>0$ is in the equilibrium, it is easy to verify that $W_L$ and $W_R$ satisfy
\begin{eqnarray}\label{error-IBVP}
\left\{{\begin{array}{*{20}l}
\partial_t\begin{pmatrix}
W_L \\[2mm]
W_R
\end{pmatrix} + 
\begin{pmatrix}
-A & 0 \\[2mm]
0 & A
\end{pmatrix}
\partial_x\begin{pmatrix}
W_L \\[2mm]
W_R
\end{pmatrix} = 
\begin{pmatrix}
Q & 0 \\[2mm]
0 & Q/\epsilon
\end{pmatrix}
\begin{pmatrix}
W_L \\[2mm]
W_R
\end{pmatrix}+\begin{pmatrix}
0 \\[2mm]
R
\end{pmatrix},\quad x>0,\\[7mm]
~W_L(0,t) - W_R(0,t) = -\sqrt{\epsilon}g(t),\\[3mm]
~(W_L, W_R)(x,0) = 0.
\end{array}}\right.
\end{eqnarray}
Here we have used the notation
$$
R := R_1 + 
\begin{pmatrix}
0\\[1mm]
R_2
\end{pmatrix},\qquad 
g(t) := \begin{pmatrix}
\hat{\mu}_1\\[1mm]
\hat{\nu}_1
\end{pmatrix}(0,t).
$$

To estimate the error $(W_L,W_R)$, we decompose \eqref{error-IBVP} into two problems
\begin{eqnarray}\label{error-eq1}
\left\{{\begin{array}{*{20}l}
\partial_t\begin{pmatrix}
W_{L,1} \\[2mm]
W_{R,1}
\end{pmatrix} + 
\begin{pmatrix}
-A & 0 \\[2mm]
0 & A
\end{pmatrix}
\partial_x\begin{pmatrix}
W_{L,1} \\[2mm]
W_{R,1}
\end{pmatrix} = 
\begin{pmatrix}
Q & 0 \\[2mm]
0 & Q/\epsilon
\end{pmatrix}
\begin{pmatrix}
W_{L,1} \\[2mm]
W_{R,1}
\end{pmatrix}+\begin{pmatrix}
0 \\[2mm]
R
\end{pmatrix},\quad x>0,\\[7mm]
~R_-^TW_{L,1}(0,t) = 0,\qquad R_+^TW_{R,1}(0,t)=0,\\[3mm]
~(W_{L,1}, W_{R,1})(x,0) = 0
\end{array}}\right.
\end{eqnarray}
and 
\begin{eqnarray}\label{error-eq2}
\left\{{\begin{array}{*{20}l}
\partial_t \begin{pmatrix}
W_{L,2} \\[2mm]
W_{R,2}
\end{pmatrix} + 
\begin{pmatrix}
-A & 0 \\[2mm]
0 & A
\end{pmatrix}
\partial_x \begin{pmatrix}
W_{L,2} \\[2mm]
W_{R,2}
\end{pmatrix} = 
\begin{pmatrix}
Q & 0 \\[2mm]
0 & Q/\epsilon
\end{pmatrix}
\begin{pmatrix}
W_{L,2} \\[2mm]
W_{R,2}
\end{pmatrix},\qquad x>0,\\[7mm]
~W_{L,2}(0,t) - W_{R,2}(0,t)=-\sqrt{\epsilon}g(t)-\left(W_{L,1}(0,t) - W_{R,1}(0,t)\right),\\[3mm]
~(W_{L,2}, W_{R,2})(x,0) = 0.
\end{array}}\right.
\end{eqnarray}
It is clear that $W_L=W_{L,1}+W_{L,2}$ and $W_R=W_{R,1}+W_{R,2}$ are the solutions to \eqref{error-IBVP}.

\textbf{Step 2:}
The error $(W_{L,1}, W_{R,1})$ in the first IBVP \eqref{error-eq1} is estimated by the energy method. Multiplying $\left(W_{L,1}^T,W_{R,1}^T\right)$ on both sides of the equation and integrating over $x>0$ yield
\begin{align*}
&\frac{d}{dt}\left(\big\|W_{L,1}\big\|_{L^2(\mathbb{R}^+)}^2+\big\|W_{R,1}\big\|_{L^2(\mathbb{R}^+)}^2\right) + \Big(W_{L,1}^T R_+ \Lambda_+  R_+^TW_{L,1} - W_{R,1}^T R_- \Lambda_-  R_-^TW_{R,1}\Big)\Big|_{x=0} \\[2mm]
\leq &~ 2 \int_0^{\infty} W_{L,1}^T Q W_{L,1} dx + \frac{2}{\epsilon} \int_0^{\infty} W_{R,1}^T Q W_{R,1} dx + 2\int_0^{\infty} W_{R,1}^T R_1 dx + 2 \int_0^{\infty}W_{R,1}^T \begin{pmatrix}
0\\[1mm]
R_2
\end{pmatrix}dx \\[2mm]
\leq &~ -\frac{2C_3}{\epsilon} \|W_{R,1}^{II}\|_{L^2(\mathbb{R}^+)}^2 
+ \|W_{R,1}\|_{L^2(\mathbb{R}^+)}^2 + \|R_1\|_{L^2(\mathbb{R}^+)}^2
+ \frac{C_3}{\epsilon} \|W_{R,1}^{II}\|_{L^2(\mathbb{R}^+)}^2 + \frac{4 \epsilon}{C_3}\|R_2\|_{L^2(\mathbb{R}^+)}^2\\[2mm]
\leq &~ \|W_{R,1}\|_{L^2(\mathbb{R}^+)}^2 + \|R_1\|_{L^2(\mathbb{R}^+)}^2
 + \frac{4 \epsilon}{C_3}\|R_2\|_{L^2(\mathbb{R}^+)}^2.
\end{align*}
Here $C_3>0$ is the minimum eigenvalue of $-S$ and independent of $\epsilon$. $W_{R,1}^I$ and $W_{R,1}^{II}$ denote the first $(n-m)$ components and the remaining $m$ components of $W_{R,1}$, respectively. Since the boundary term $(W_{L,1}^T R_+ \Lambda_+  R_+^TW_{L,1} - W_{R,1}^T R_- \Lambda_-  R_-^TW_{R,1})$ is nonnegative, we use the Gronwall's inequality and the relation \eqref{proof:residuals} to obtain
$$
\big\|W_{L,1}(\cdot,t)\big\|_{L^2(\mathbb{R}^+)}^2+\big\|W_{R,1}(\cdot,t)\big\|_{L^2(\mathbb{R}^+)}^2 \leq C(T) \epsilon.
$$
Moreover, by the boundary conditions we have
$$
|W_{L,1}(0,t)|^2=|R_+^TW_{L,1}(0,t)|^2 \leq C_4 W_{L,1}^T R_+ \Lambda_+  R_+^TW_{L,1}
$$
and
$$
|W_{R,1}(0,t)|^2=|R_-^TW_{R,1}(0,t)|^2 \leq C_4 W_{R,1}^T R_- (-\Lambda_-)  R_-^TW_{R,1}
$$
with $C_4>0$. This implies the estimate for boundary terms 
$$
\int_0^T |W_{L,1}(0,t)|^2 + |W_{R,1}(0,t)|^2 dt \leq C(T) \epsilon.
$$

\textbf{Step 3:}
To obtain the estimate for the second IBVP \eqref{error-eq2}, we check the GKC for the boundary condition in \eqref{error-eq2}, i.e.,
$$
|\det\{(I,-I)R_M^S(\xi,\eta)\}|\geq c_K
$$
where $R_M^S(\xi,\eta)$ is the right-stable matrix for 
$$
M(\xi,\eta)=
\begin{pmatrix}
-A & 0 \\[1mm]
0 & A
\end{pmatrix}^{-1}\left[\eta\begin{pmatrix}
0 & 0 \\[1mm]
0 & Q
\end{pmatrix}-\xi \begin{pmatrix}
I & 0 \\[1mm]
0 & I
\end{pmatrix}\right].
$$
It is easy to see that $R_M^S(\xi,\eta)=\text{diag}(R_{1}^S(\xi),R_{2}^S(\xi,\eta))$ with $R_1^S(\xi)$ the right-stable matrix for $\xi A^{-1}$ and $R_2^S(\xi,\eta)$ the right-stable matrix for $A^{-1}(\eta Q-\xi I)$. Clearly, $R_{1}^S(\xi)$ can be chosen as $R_-$ and it suffices to check 
$$
|\det\{(I,-I)R_M^S(\xi,\eta)\}|=|\det(R_-,-R_2^S(\xi,\eta))|\geq c_K.
$$
In fact, the above inequality follows from the fact $|\det\{R_+^TR_2^S(\xi,\eta)\}|\geq c_K$ (see Appendix \ref{appedix:GKC}) and the relation
$$
\begin{pmatrix}
R_-^T \\[2mm]
R_+^T
\end{pmatrix}(R_-,-R_2^S(\xi,\eta))=\begin{pmatrix}
I & -R_-^TR_2^S(\xi,\eta)\\[2mm]
0 & R_+^TR_2^S(\xi,\eta)
\end{pmatrix}.
$$
Thanks to Proposition \ref{prop:appendix-estimate} in Appendix \ref{appedix:GKC}, we have the following estimate:
\begin{align*}
\big\|W_{L,2}(\cdot,t)\big\|_{L^2(\mathbb{R}^+)}^2+\big\|W_{R,2}(\cdot,t)\big\|_{L^2(\mathbb{R}^+)}^2 \leq & ~C(T) \int_0^T |W_{L,2}(0,t)|^2 + |W_{R,2}(0,t)|^2 dt \\[2mm]
\leq & ~C(T) \int_0^T \epsilon|g(t)|^2+|W_{L,1}(0,t)|^2 + |W_{R,1}(0,t)|^2 dt\\[2mm]
\leq & ~C(T) \epsilon.
\end{align*}

\textbf{Step 4:}
Combining the estimates for $(W_{L,1},W_{R,1})$ and $(W_{L,2},W_{R,2})$, we know that $\|W_L(\cdot,t)\|_{L^2(\mathbb{R}^+)}^2 + \|W_R(\cdot,t)\|_{L^2(\mathbb{R}^+)}^2 \leq C(T)\epsilon$. Recall the definition of $W_L$ and $W_R$ in \eqref{eq:error-W}, we have the estimate:
$$
\|(U^\epsilon-U_\epsilon)(\cdot,t)\|_{L^2(\mathbb{R})}\leq C(T) \sqrt{\epsilon}.
$$
At last, from the relations 
$$
\int_{0}^{+\infty}|(\hat{\mu},\hat{\nu})(\frac{x}{\sqrt{\epsilon}},t)|^2 dx 
= \sqrt{\epsilon}\int_{0}^{+\infty}|(\hat{\mu},\hat{\nu})(y,t)|^2 dy \leq C\sqrt{\epsilon}
$$
and 
$$
\int_{0}^{+\infty}|(\tilde{\mu},\tilde{\nu})(\frac{x}{\epsilon},t)|^2 dx 
= \epsilon\int_{0}^{+\infty}|(\hat{\mu},\hat{\nu})(z,t)|^2 dz \leq C \epsilon,
$$
we know that $\|(U_\epsilon-U^D)(\cdot,t)\|_{L^2(\mathbb{R})}\leq C \epsilon^{1/4}$. Consequently, we obtain the error estimate from the triangle inequality 
$$
\|(U^\epsilon-U^D)(\cdot,t)\|_{L^2(\mathbb{R})}\leq \|(U^\epsilon-U_\epsilon)(\cdot,t)\|_{L^2(\mathbb{R})} + \|(U_\epsilon-U^D)(\cdot,t)\|_{L^2(\mathbb{R})} \leq C \epsilon^{1/4}.
$$
\end{proof}

\begin{remark}\label{rmk25}
From the Step 4 in the proof, we know that the error of order $O(\epsilon^{1/4})$ is due to the boundary-layer terms of $\sqrt{\epsilon}$ scale. Denote $U^\epsilon_i$ and $U_{i}^D$ as the $i$-th component of $U^\epsilon$ and $U^D$. If there is no $\sqrt{\epsilon}$ scale boundary-layer for $U^\epsilon_i$, then the error estimate for this variable should be 
$$
\|(U_i^\epsilon-U_{i}^D)(\cdot,t)\|_{L^2(\mathbb{R})}\leq C \epsilon^{1/2}.
$$ 
We will verify it numerically in Section \ref{sec:numerical-example}.
\end{remark}

\section{Numerical scheme}\label{sec:numerical-scheme}
With the derived coupling condition \eqref{eq:coupling-condition-B}, we proceed to present a DG scheme to solve the interface problem \eqref{leftproblem}-\eqref{rightproblem}-\eqref{eq:coupling-condition-B}. We also show the weighted $L^2$ stability of the semi-discrete DG scheme.

For the sake of simplicity, we only consider compactly supported boundary conditions, i.e., the solution has compact support in $I=(I_l, I_r)\subset\mathbb{R}$ with $I_l < 0 < I_r$ for some finite time. We introduce the usual notation of the DG method. Let us start by assuming the following mesh to cover the interval $I=(I_l, I_r)$, consisting of cells $I_i=(x_{i-\frac{1}{2}},x_{i+\frac{1}{2}})$ for $i=\pm N,\pm(N-1),...,\pm 1,0$ where
$$
I_l = x_{-N-\frac{1}{2}} < x_{-N+\frac{1}{2}} < \cdots < x_{\frac{1}{2}} < x_{\frac{3}{2}} < \cdots < x_{N-\frac{1}{2}} < x_{N+\frac{1}{2}} = I_r.
$$
We assume that the interface is located at $x_{\frac{1}{2}}=0$. 
Note that in the interface problem, the solution $U^l$ on the left domain $I^- := I\cap\mathbb{R}^-$ is a vector-valued function of size $n$, which is different from the size $(n-m)$ for the solution $u^r$ on the right domain $I^+ := I\cap\mathbb{R}^+$. Therefore, we define two piecewise polynomial spaces as the DG finite element spaces on $I^-$ and $I^+$, respectively:
$$
\mathbb{V}_h^{-} = \{ v\in [L^2(I^-)]^n: v|_{I_i}\in [P^k(I_i)]^n, ~ -N\leq i \leq 0 \}
$$
and
$$
\mathbb{V}_h^{+} = \{ v\in [L^2(I^+)]^{n-m}: v|_{I_i}\in [P^k(I_i)]^{n-m}, ~ 1\leq i \leq N \}
$$
where $P^k(I_i)$ denotes the set of polynomials of degree up to $k\ge1$ defined on the cell $I_i$. Note that each component of a vector-valued  function in $\mathbb{V}_h^{-}$ and $\mathbb{V}_h^{+}$ is allowed to have discontinuities across element interfaces.

Next, we present the DG scheme for solving the interface problem. On the left domain $I^-$, the equation \eqref{leftproblem} is solved by the  scheme: find $U_h\in \mathbb{V}_h^{-}$ such that, for any $V_h\in \mathbb{V}_h^{-}$ and $-N\leq i\leq 0$, there holds the following relation:
\begin{equation}\label{DGscheme}
\int_{I_i} V_h^T (\partial_tU_h)  - \int_{I_i} (\partial_xV_h)^T A U_h  +V_h^T(x^-_{i+\frac{1}{2}})\hat{F}_{i+\frac{1}{2}} - V_h^T(x^+_{i-\frac{1}{2}})\hat{F}_{i-\frac{1}{2}} = \int_{I_i}V_h^TQU_h.
\end{equation}
Here $\hat{F}_{i+\frac{1}{2}}$ is taken as the upwind numerical flux \cite{leveque1992numerical}
\begin{equation}\label{eq:flux-upwind}
\hat{F}_{i+\frac{1}{2}} = A^+U_h(x^-_{i+\frac{1}{2}})+A^-U_h(x^+_{i+\frac{1}{2}})
\end{equation}
with $A^+=R_+\Lambda_+R_+^T$ and $A^-=R_-\Lambda_-R_-^T$.

Similarly, on the right domain $I^+$, the DG scheme reads as: find $u_h\in \mathbb{V}_h^{+}$ such that, for any $v_h\in \mathbb{V}_h^{+}$ and $1\leq i\leq N$
\begin{align}\label{DGschemeeq}
\int_{I_i} v_h^T (\partial_tu_h)  - \int_{I_i} (\partial_xv_h)^T A_{11} u_h  +v_h^T(x^-_{i+\frac{1}{2}})\hat{f}_{i+\frac{1}{2}} - v_h^T(x^+_{i-\frac{1}{2}})\hat{f}_{i-\frac{1}{2}} = 0.
\end{align}
Here $\hat{f}_{i+\frac{1}{2}}$ is the upwind numerical flux
\begin{equation}\label{eq:flux-upwind-f}
\hat{f}_{i+\frac{1}{2}} = A_{11}^+u_h(x^-_{i+\frac{1}{2}})+A_{11}^-u_h(x^+_{i+\frac{1}{2}}),
\end{equation}
with $A_{11}^+=P_+\Lambda_{1+}P_+^T$ and $A_{11}^-=P_-\Lambda_{1-}P_-^T$.

At the interface $x_{\frac{1}{2}} = 0$, the value of $R_-^TU_h(x^+_{\frac{1}{2}})$ and $P_+^Tu_h(x^-_{\frac{1}{2}})$ are determined by the coupling condition \eqref{eq:coupling-condition-B}:
\begin{equation}\label{DGBC}
\left\{
\begin{array}{l}
R_-^TU_h(x^+_{\frac{1}{2}}) = B_{l,l}\Big(R_+^TU_h(x^-_{\frac{1}{2}})\Big) + B_{l,r}\Big(P_-^Tu_h(x^+_{\frac{1}{2}})\Big)\\[5mm]
P_+^Tu_h(x^-_{\frac{1}{2}}) = B_{r,r}\Big(P_-^Tu_h(x^+_{\frac{1}{2}})\Big) + B_{r,l}\Big(R_+^TU_h(x^-_{\frac{1}{2}})\Big).
\end{array}\right.
\end{equation}

It is easy to see that the scheme \eqref{DGscheme}-\eqref{DGschemeeq} together with the treatment \eqref{DGBC} at the interface give a complete numerical algorithm.
In the following theorem, we will show that the semi-discrete scheme satisfies the weighted $L^2$ stability.
\begin{theorem}
Suppose that the structural stability condition and the non-characteristic assumption $\det(A)\neq 0$ hold.
The semi-discrete DG scheme \eqref{DGscheme}-\eqref{DGschemeeq} together with the coupling condition \eqref{DGBC} satisfy the following stability estimate
$$
\|U_h(t)\|^2_{\delta_-}+\|u_h(t)\|^2_{\delta_+}\leq C(T,\delta) \left(\|U_h(0)\|^2_{\delta_-}+\|u_h(0)\|^2_{\delta_+}\right),
$$
for any $t\in[0,T]$ with $C=C(T,\delta)>0$. Here the weighted $L^2$-norms $\|\cdot\|_{\delta_-}$ and $\|\cdot\|_{\delta_+}$ are defined by
$$
\|U\|^2_{\delta_-}=\int_{\mathbb{R}^-}|R_+^TU|^2+\delta|R_-^TU|^2 dx,\quad 
\|u\|^2_{\delta_+}=\int_{\mathbb{R}^+}\delta|P_+^Tu|^2+|P_0^Tu|^2+|P_-^Tu|^2 dx,
$$
where $\delta>0$ is a constant.
\end{theorem}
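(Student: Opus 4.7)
The plan is an energy estimate with carefully chosen weighted test functions. Introduce the constant symmetric positive definite weight matrices
$$M_- = R_+R_+^T + \delta R_-R_-^T, \qquad M_+ = \delta P_+P_+^T + P_0P_0^T + P_-P_-^T,$$
so that $\|U\|^2_{\delta_-}=\int U^T M_- U\,dx$ and $\|u\|^2_{\delta_+}=\int u^T M_+ u\,dx$. The key algebraic property, a direct consequence of the orthonormality of the eigenvector matrices $R$ and $P$, is that
$$M_-A = R_+\Lambda_+R_+^T + \delta R_-\Lambda_-R_-^T, \qquad M_+A_{11} = \delta P_+\Lambda_{1+}P_+^T + P_-\Lambda_{1-}P_-^T$$
are both symmetric. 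This is exactly what is needed for integration by parts with these weights to reproduce the time derivative of the weighted norms together with controllable boundary flux terms.

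First I would set $V_h = M_- U_h$ in \eqref{DGscheme} and $v_h = M_+ u_h$ in \eqref{DGschemeeq}, apply integration by parts inside each cell, and sum over the respective meshes. This produces an identity of the form
$$\tfrac{1}{2}\tfrac{d}{dt}\bigl(\|U_h\|^2_{\delta_-}+\|u_h\|^2_{\delta_+}\bigr) + D_{\mathrm{int}} + T = \int_{I^-} U_h^T M_- Q U_h\,dx,$$
where $D_{\mathrm{int}}$ collects the interior-face terms and $T$ collects the two boundary contributions at the interface $x_{1/2}=0$. Expanding the upwind flux in characteristic variables, each interior face on the left contributes $\tfrac{1}{2}[\alpha_+]^T\Lambda_+[\alpha_+] - \tfrac{\delta}{2}[\alpha_-]^T\Lambda_-[\alpha_-]\ge 0$ to $D_{\mathrm{int}}$, with the analogous nonnegative expression arising on the right, so $D_{\mathrm{int}}\ge 0$. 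The outer boundary terms vanish by the compact-support assumption, and the relaxation source on the left is controlled by the crude pointwise bound $|U_h^T M_- Q U_h|\le C|U_h|^2\le C_\delta\, U_h^T M_- U_h$.

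The heart of the argument is showing $T\ge 0$. Denote $\alpha_\pm = R_\pm^T U_h(0^-)$, $\beta_\pm = P_\pm^T u_h(0^+)$, $\beta_0 = P_0^T u_h(0^+)$, and the coupling-prescribed ghost values $\tilde\alpha_- = B_{l,l}\alpha_+ + B_{l,r}\beta_-$, $\tilde\beta_+ = B_{r,r}\beta_- + B_{r,l}\alpha_+$ taken from \eqref{DGBC}. Substituting the upwind fluxes into the two interface terms and using the symmetry of $M_-A$ and $M_+A_{11}$, a direct computation yields
\begin{align*}
T &= \tfrac{1}{2}\alpha_+^T\Lambda_+\alpha_+ - \tfrac{1}{2}\beta_-^T\Lambda_{1-}\beta_- \\
&\quad + \delta\bigl[-\tfrac{1}{2}\alpha_-^T\Lambda_-\alpha_- + \alpha_-^T\Lambda_-\tilde\alpha_- + \tfrac{1}{2}\beta_+^T\Lambda_{1+}\beta_+ - \beta_+^T\Lambda_{1+}\tilde\beta_+\bigr].
\end{align*}
The first two terms are bounded below by $c_0(|\alpha_+|^2+|\beta_-|^2)$ with $c_0>0$ since $\Lambda_+>0$ and $-\Lambda_{1-}>0$. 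Completing the square inside the bracket via the identity $-\tfrac{1}{2}a^T\Lambda a + a^T\Lambda\tilde a = \tfrac{1}{2}\tilde a^T\Lambda\tilde a - \tfrac{1}{2}(a-\tilde a)^T\Lambda(a-\tilde a)$ and using the linear bound $|\tilde\alpha_-|^2+|\tilde\beta_+|^2\le C_B(|\alpha_+|^2+|\beta_-|^2)$ (which holds because the coupling involves neither $\alpha_-$, $\beta_+$, nor $\beta_0$), the bracket is bounded below by $-C_1(|\alpha_+|^2+|\beta_-|^2)$ for some $C_1>0$ depending only on the $B$-matrices and the eigenvalues. Picking $\delta\in(0,c_0/C_1)$ therefore yields $T\ge 0$.

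Combining $D_{\mathrm{int}}\ge 0$ and $T\ge 0$ with the source bound produces
$$\tfrac{d}{dt}\bigl(\|U_h\|^2_{\delta_-}+\|u_h\|^2_{\delta_+}\bigr)\le 2C_\delta\bigl(\|U_h\|^2_{\delta_-}+\|u_h\|^2_{\delta_+}\bigr),$$
and Gronwall's inequality gives the desired estimate with $C(T,\delta)=\exp(2C_\delta T)$. The principal obstacle is precisely the interface analysis: one must guess the correct two-parameter weighted norm, verify that $M_\pm$ symmetrize the flux Jacobians, and then exploit the fact that the coupling matrices only couple the outgoing characteristics $(\alpha_+,\beta_-)$ back into the incoming ghost values in order to absorb the cross terms by choosing $\delta$ small.
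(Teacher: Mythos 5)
Your proposal is correct and follows essentially the same route as the paper: the same weighted test functions $V_h=(R_+R_+^T+\delta R_-R_-^T)U_h$ and $v_h=(\delta P_+P_+^T+P_0P_0^T+P_-P_-^T)u_h$, the same nonnegative interior-face (upwind) dissipation, the same crude bound on the relaxation source, and the same interface argument in which the coupling condition \eqref{DGBC} bounds the ghost characteristics by $(\alpha_+,\beta_-)$ so that the cross terms are absorbed by taking $\delta$ small, followed by Gronwall. The only cosmetic difference is that you absorb the interface cross terms by completing the square, whereas the paper uses Young-type inequalities; the estimates are equivalent.
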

\begin{proof}
\textbf{Step 1:} In \eqref{DGscheme}, by taking $V_h^T=U_h^T( R_+R_+^T+ \delta R_-R_-^T)$ with $\delta>0$ to be specified later, we have 
\begin{align*} 
&~\frac{1}{2}\frac{d}{dt}\int_{I_i}|R_+^TU_h|^2+\delta|R_-^TU_h|^2 dx \\[2mm]
=&~ \frac{1}{2} \left( U_h^T A^+ U_h + \delta U_h^T A^- U_h\right)\Big|_{x^+_{i-\frac{1}{2}}}^{x^-_{i+\frac{1}{2}}} ~+ \int_{I_i}U_h^T( R_+R_+^T+\delta R_-R_-^T)QU_h dx \\[3mm]
  &- U_h^T(x^-_{i+\frac{1}{2}})( R_+R_+^T+\delta R_-R_-^T)\hat{F}_{i+\frac{1}{2}} + U_h^T(x^+_{i-\frac{1}{2}})( R_+R_+^T+\delta R_-R_-^T)\hat{F}_{i-\frac{1}{2}}.
\end{align*}
According to the definition of numerical flux $\hat{F}_{i+\frac{1}{2}}$ in \eqref{eq:flux-upwind}, it is easy to compute 
$$
(R_+R_+^T+\delta R_-R_-^T)\hat{F}_{i+\frac{1}{2}}=A^+U_h(x^-_{i+\frac{1}{2}})+ \delta A^-U_h(x^+_{i+\frac{1}{2}}).
$$
Denote $U_{i+\frac{1}{2}}^-=U_h(x^-_{i+\frac{1}{2}})$ and $U_{i+\frac{1}{2}}^+=U_h(x^+_{i+\frac{1}{2}})$ for each $i$.
Then the above equation can be written as
\begin{align} 
 &~\frac{1}{2}\frac{d}{dt}\int_{I_i}|R_+^TU_h|^2+\delta|R_-^TU_h|^2 dx - \int_{I_i}U_h^T( R_+R_+^T+\delta R_-R_-^T)QU_h dx \nonumber\\[3mm]
=&~ J_1(U_{i+\frac{1}{2}}^-,U_{i+\frac{1}{2}}^+) - J_2(U_{i-\frac{1}{2}}^-,U_{i-\frac{1}{2}}^+), \label{eq:theorem3.1-1}
\end{align}
where 
\begin{align*}
&J_1(U_{i+\frac{1}{2}}^-,U_{i+\frac{1}{2}}^+) = \frac{1}{2} \Big[ (U_{i+\frac{1}{2}}^-)^T A^+ U_{i+\frac{1}{2}}^- + \delta (U_{i+\frac{1}{2}}^-)^T A^- U_{i+\frac{1}{2}}^-\Big] - (U_{i+\frac{1}{2}}^-)^T \Big[A^+U_{i+\frac{1}{2}}^- + \delta A^- U_{i+\frac{1}{2}}^+\Big] \\[3mm]
&J_2(U_{i-\frac{1}{2}}^-,U_{i-\frac{1}{2}}^+) = \frac{1}{2} \Big[ (U_{i-\frac{1}{2}}^+)^T A^+ U_{i-\frac{1}{2}}^+ + \delta (U_{i-\frac{1}{2}}^+)^T A^- U_{i-\frac{1}{2}}^+\Big] - (U_{i-\frac{1}{2}}^+)^T\Big[A^+U_{i-\frac{1}{2}}^- + \delta A^- U_{i-\frac{1}{2}}^+\Big].
\end{align*}
By a simple computation, we have
\begin{align*}
&J_2(U_{i-\frac{1}{2}}^-,U_{i-\frac{1}{2}}^+) - J_1(U_{i-\frac{1}{2}}^-,U_{i-\frac{1}{2}}^+)\\[2mm]
= & ~\frac{1}{2} \Big[ (U_{i-\frac{1}{2}}^+)^T A^+ U_{i-\frac{1}{2}}^+ + \delta (U_{i-\frac{1}{2}}^+)^T A^- U_{i-\frac{1}{2}}^+\Big] - (U_{i-\frac{1}{2}}^+)^T\Big[A^+U_{i-\frac{1}{2}}^- + \delta A^- U_{i-\frac{1}{2}}^+\Big] \\[2mm]
 &- \frac{1}{2} \Big[ (U_{i-\frac{1}{2}}^-)^T A^+ U_{i-\frac{1}{2}}^- + \delta (U_{i-\frac{1}{2}}^-)^T A^- U_{i-\frac{1}{2}}^-\Big] + (U_{i-\frac{1}{2}}^-)^T \Big[A^+U_{i-\frac{1}{2}}^- + \delta A^- U_{i-\frac{1}{2}}^+\Big] \\[2mm]
=&~\frac{1}{2} (U_{i-\frac{1}{2}}^+-U_{i-\frac{1}{2}}^-)^T A^+ (U_{i-\frac{1}{2}}^+-U_{i-\frac{1}{2}}^-)
-\frac{\delta}{2} (U_{i-\frac{1}{2}}^+-U_{i-\frac{1}{2}}^-)^T A^- (U_{i-\frac{1}{2}}^+-U_{i-\frac{1}{2}}^-) \ge 0.
\end{align*}
Thereby \eqref{eq:theorem3.1-1} becomes 
\begin{align} 
&\frac{1}{2}\frac{d}{dt}\int_{I_i}U_h^T( R_+R_+^T+\delta R_-R_-^T)U_h dx 
-\int_{I_i} U_h^T(R_+R_+^T+\delta R_-R_-^T)QU_h dx \nonumber\\[2mm]
=&~ J_1(U_{i+\frac{1}{2}}^-,U_{i+\frac{1}{2}}^+) - J_1(U_{i-\frac{1}{2}}^-,U_{i-\frac{1}{2}}^+) - \big[J_2(U_{i-\frac{1}{2}}^-,U_{i-\frac{1}{2}}^+)-J_1(U_{i-\frac{1}{2}}^-,U_{i-\frac{1}{2}}^+)\big] \nonumber \\[3mm]
\leq &~J_1(U_{i+\frac{1}{2}}^-,U_{i+\frac{1}{2}}^+) - J_1(U_{i-\frac{1}{2}}^-,U_{i-\frac{1}{2}}^+). \label{eq:theorem3.1-2}
\end{align}
On the other hand, there exists a constant $C_\delta>0$ such that 
\begin{align*} 
\int_{I_i}U_h^T( R_+R_+^T+\delta R_-R_-^T)QU_h dx \leq C_\delta \int_{I_i}U_h^T( R_+R_+^T+\delta R_-R_-^T)U_h dx.
\end{align*}
Making summation over $i$ from $-N$ to $0$ in  \eqref{eq:theorem3.1-2}, we have
\begin{align}\label{eq:theorem3.1-step1}
\frac{d}{dt}\|U_h\|^2_{\delta_-}  \leq & ~2J_1(U_{\frac{1}{2}}^-,U_{\frac{1}{2}}^+) + 2C_\delta \|U_h\|^2_{\delta_-}.
\end{align}
Here, we use the compact supported boundary condition to obtain $J_1(U_{-N-\frac{1}{2}}^-,U_{-N-\frac{1}{2}}^+) = 0$.

\textbf{Step 2:}
Similar to Step 1, we analyze the scheme for the equilibrium system. Taking $v_h^T=u_h^T( \delta P_+P_+^T + P_0P_0^T + P_-P_-^T)$ in \eqref{DGschemeeq}, we have 
\begin{align} 
&~\frac{1}{2}\frac{d}{dt}\int_{I_i}\delta|P_+^Tu_h|^2+|P_0^Tu_h|^2+|P_-^Tu_h|^2 dx \nonumber \\[2mm]
= &~ \frac{1}{2} \left( \delta u_h^T A_{11}^+ u_h + u_h^T A_{11}^- u_h\right)\Big|_{x^+_{i-\frac{1}{2}}}^{x^-_{i+\frac{1}{2}}} - u_h^T(x^-_{i+\frac{1}{2}})( \delta P_+P_+^T + P_0P_0^T + P_-P_-^T)\hat{f}_{i+\frac{1}{2}}\nonumber \\[2mm]
  & + u_h^T(x^+_{i-\frac{1}{2}})( \delta P_+P_+^T + P_0P_0^T + P_-P_-^T)\hat{f}_{i-\frac{1}{2}}. \label{eq:theorem3.1-3}
\end{align}
According to the definition of $\hat{f}_{i+\frac{1}{2}}$ in \eqref{eq:flux-upwind-f}, it follows that
\begin{align*} 
( \delta P_+P_+^T + P_0P_0^T + P_-P_-^T)\hat{f}_{i+\frac{1}{2}} =&~ ( \delta P_+P_+^T + P_0P_0^T + P_-P_-^T)\big[A_{11}^+u_h(x^-_{i+\frac{1}{2}})+A_{11}^-u_h(x^+_{i+\frac{1}{2}})\big]\\[2mm]
=&~ \delta A_{11}^+u_h(x^-_{i+\frac{1}{2}}) + A_{11}^-u_h(x^+_{i+\frac{1}{2}}).
\end{align*}
Denote $u_{i+\frac{1}{2}}^-=u_h(x^-_{i+\frac{1}{2}})$ and $u_{i+\frac{1}{2}}^+=u_h(x^+_{i+\frac{1}{2}})$. The equation \eqref{eq:theorem3.1-3} becomes 
\begin{align}\label{eq:theorem3.1-4}
&~\frac{1}{2}\frac{d}{dt}\int_{I_i}\delta|P_+^Tu_h|^2+|P_0^Tu_h|^2+|P_-^Tu_h|^2 dx  = j_1(u_{i+\frac{1}{2}}^-,u_{i+\frac{1}{2}}^+)-j_2(u_{i-\frac{1}{2}}^-,u_{i-\frac{1}{2}}^+),
\end{align}
where
\begin{align*}
&j_1(u^-_{i+\frac{1}{2}},u^+_{i+\frac{1}{2}}) = \frac{1}{2} \Big[ \delta (u_{i+\frac{1}{2}}^-)^T A_{11}^+ u_{i+\frac{1}{2}}^- + (u_{i+\frac{1}{2}}^-)^T A_{11}^- u_{i+\frac{1}{2}}^-\Big] - (u_{i+\frac{1}{2}}^-)^T\Big[\delta A_{11}^+ u_{i+\frac{1}{2}}^- + A_{11}^- u_{i+\frac{1}{2}}^+\Big] \\[3mm]
&j_2(u^-_{i-\frac{1}{2}},u^+_{i-\frac{1}{2}}) = \frac{1}{2} \Big[ \delta (u_{i-\frac{1}{2}}^+)^T A_{11}^+ u_{i-\frac{1}{2}}^+ + (u_{i-\frac{1}{2}}^+)^T A_{11}^- u_{i-\frac{1}{2}}^+\Big] - (u_{i-\frac{1}{2}}^+)^T\Big[\delta A_{11}^+ u_{i-\frac{1}{2}}^- + A_{11}^- u_{i-\frac{1}{2}}^+\Big].
\end{align*}
With the same computation as previous, it can be shown that 
\begin{align*}
&~j_1(u^-_{i+\frac{1}{2}},u^+_{i+\frac{1}{2}}) - j_2(u^-_{i+\frac{1}{2}},u^+_{i+\frac{1}{2}}) \\[2mm]
=&~ \frac{1}{2}(u_{i+\frac{1}{2}}^+-u_{i+\frac{1}{2}}^-)^T A_{11}^- (u_{i+\frac{1}{2}}^+-u_{i+\frac{1}{2}}^-) - \frac{\delta}{2}(u_{i+\frac{1}{2}}^+-u_{i+\frac{1}{2}}^-)^T A_{11}^+ (u_{i+\frac{1}{2}}^+-u_{i+\frac{1}{2}}^-)\leq 0.
\end{align*}
Then \eqref{eq:theorem3.1-3} gives 
\begin{align*}
&~\frac{1}{2}\frac{d}{dt}\int_{I_i}\delta|P_+^Tu_h|^2+|P_0^Tu_h|^2+|P_-^Tu_h|^2 dx  \\[2mm]
=&~ j_2(u_{i+\frac{1}{2}}^-,u_{i+\frac{1}{2}}^+)-j_2(u^-_{i-\frac{1}{2}},u^+_{i-\frac{1}{2}}) + [j_1(u_{i+\frac{1}{2}}^-,u_{i+\frac{1}{2}}^+) - j_2(u_{i+\frac{1}{2}}^-,u_{i+\frac{1}{2}}^+)] \\[3mm]
\leq &~j_2(u_{i+\frac{1}{2}}^-,u_{i+\frac{1}{2}}^+)-j_2(u^-_{i-\frac{1}{2}},u^+_{i-\frac{1}{2}}).
\end{align*}
Making summation over $i$ from $1$ to $N$, we have
\begin{align}\label{eq:theorem3.1-step2} 
\frac{d}{dt}\|u_h\|^2_{\delta_+}  \leq -2 j_2(u^-_{\frac{1}{2}},u^+_{\frac{1}{2}}).
\end{align}
Here, we use the compact supported boundary condition to obtain $j_2(u_{N+\frac{1}{2}}^-, u_{N+\frac{1}{2}}^+) = 0$.

\textbf{Step 3:} In this step, we estimate the boundary terms on the right-hand sides of \eqref{eq:theorem3.1-step1} and \eqref{eq:theorem3.1-step2}.
Recall that
\begin{align*} 
2J_1(U_{\frac{1}{2}}^-,U_{\frac{1}{2}}^+)
 = &~ \big[ (U_{\frac{1}{2}}^-)^T A^+ U_{\frac{1}{2}}^- + \delta (U_{\frac{1}{2}}^-)^T A^- U_{\frac{1}{2}}^-\big] - 2(U_{\frac{1}{2}}^-)^T \big[A^+U_{\frac{1}{2}}^- + \delta A^- U_{\frac{1}{2}}^+\big] \\[2mm]
 = & -(U_{\frac{1}{2}}^-)^T A^+ U_{\frac{1}{2}}^- + \delta (U_{\frac{1}{2}}^-)^T A^- U_{\frac{1}{2}}^- - 2\delta (U_{\frac{1}{2}}^-)^T A^- U_{\frac{1}{2}}^+. 
\end{align*}
Clearly, there exists a constant $c_0>0$ satisfying
\begin{align*}
-(U_{\frac{1}{2}}^-)^T A^+ U_{\frac{1}{2}}^- =& -(R_+^TU_{\frac{1}{2}}^-)^T \Lambda_+ (R_+^TU_{\frac{1}{2}}^-) \leq -c_0 |R_+^TU_{\frac{1}{2}}^-|^2, \\[3mm]
\delta (U_{\frac{1}{2}}^-)^T A^- U_{\frac{1}{2}}^- =& ~\delta(R_-^TU_{\frac{1}{2}}^-)^T \Lambda_- (R_+^TU_{\frac{1}{2}}^-) \leq -c_0 \delta |R_-^TU_{\frac{1}{2}}^-|^2.
\end{align*}  
Besides, there exists a constant $c_1>0$ such that
$$
-2\delta (U_{\frac{1}{2}}^-)^T A^- U_{\frac{1}{2}}^+ = -2 \delta (R_-^TU_{\frac{1}{2}}^-)^T \Lambda_- (R_-^TU_{\frac{1}{2}}^+) \leq \frac{c_0\delta}{2}|R_-^TU_{\frac{1}{2}}^-|^2 + c_1 \delta |R_-^TU_{\frac{1}{2}}^+|^2.
$$
By the coupling condition in \eqref{DGBC}, there exists a constant $c_2>0$ such that 
$$
|R_-^TU_{\frac{1}{2}}^+|^2 \leq c_2 (|R_+^TU_{\frac{1}{2}}^-|^2 + |P_-^Tu_{\frac{1}{2}}^+|^2).
$$
Hence the boundary term on the right of \eqref{eq:theorem3.1-step1} satisfies
\begin{align}\label{est1} 
2J_1(U_{\frac{1}{2}}^-,U_{\frac{1}{2}}^+)\leq -(c_0-c_1c_2 \delta) |R_+^TU_{\frac{1}{2}}^-|^2 - \frac{c_0 \delta}{2} |R_-^TU_{\frac{1}{2}}^-|^2 + c_1c_2 \delta |P_-^Tu_{\frac{1}{2}}^+|^2.
\end{align}

Now we turn to estimate the boundary term in \eqref{eq:theorem3.1-step2}. By calculation, 
\begin{align*}
-2 j_2(u^-_{\frac{1}{2}},u^+_{\frac{1}{2}}) =& - \big[ \delta (u_{\frac{1}{2}}^+)^T A_{11}^+ u_{\frac{1}{2}}^+ + (u_{\frac{1}{2}}^+)^T A_{11}^- u_{\frac{1}{2}}^+\big] + 2(u_{\frac{1}{2}}^+)^T\big[\delta A_{11}^+ u_{\frac{1}{2}}^- + A_{11}^- u_{\frac{1}{2}}^+ \big]\\[3mm]
=& -\delta (u_{\frac{1}{2}}^+)^T A_{11}^+ u_{\frac{1}{2}}^+ + (u_{\frac{1}{2}}^+)^T A_{11}^- u_{\frac{1}{2}}^+ + 2 \delta (u_{\frac{1}{2}}^+)^T A_{11}^+ u_{\frac{1}{2}}^-.
\end{align*}
Similarly, we can find constants $\tilde{c}_0$, $\tilde{c}_1>0$ such that 
\begin{align*}
-2 j_2(u^-_{\frac{1}{2}},u^+_{\frac{1}{2}}) \leq &-\frac{\tilde{c}_0}{2}\delta|P_+^Tu_{\frac{1}{2}}^+|^2 - \tilde{c}_0|P_-^Tu_{\frac{1}{2}}^+|^2 + \tilde{c}_1 \delta |P_+^Tu_{\frac{1}{2}}^-|^2.
\end{align*}
According to the coupling condition \eqref{DGBC}, the last term on the right hand side of the above inequality is bounded by 
$$
|P_+^Tu_{\frac{1}{2}}^-|^2 \leq \tilde{c}_2 (|P_-^Tu_{\frac{1}{2}}^+|^2+|R_+^TU_{\frac{1}{2}}^-|^2)
$$
with $\tilde{c}_2>0$ a constant.
Thus, the boundary term on the right hand side of \eqref{eq:theorem3.1-step2} satisfies
\begin{align*}
-2 j_2(u^-_{\frac{1}{2}},u^+_{\frac{1}{2}}) \leq &-\frac{\tilde{c}_0}{2}\delta|P_+^Tu_{\frac{1}{2}}^+|^2 - (\tilde{c}_0-\tilde{c}_1\tilde{c}_2 \delta)|P_-^Tu_{\frac{1}{2}}^+|^2 + \tilde{c}_1\tilde{c}_2 \delta |R_+^TU_{\frac{1}{2}}^-|^2.
\end{align*}
Combining this estimate with \eqref{est1}, we know that for sufficiently small $\delta>0$ 
$$
2J_1(U_{\frac{1}{2}}^-,U_{\frac{1}{2}}^+) - 2j_1(u_{\frac{1}{2}}^-,u_{\frac{1}{2}}^+) \leq 0.
$$
It is easy to see from the above proof that  $\delta$ only depends on the coefficient matrices in the problem.
Therefore, it follows from \eqref{eq:theorem3.1-step1} and \eqref{eq:theorem3.1-step2} that 
$$
\frac{d}{dt}\left(\|U_h\|^2_{\delta_-} + \|u_h\|^2_{\delta_+}\right)\leq 2C_\delta \|U_h\|^2_{\delta_-}.
$$
At last, by Gronwall's inequality, the estimate stated in the theorem holds. 
\end{proof}

\begin{remark}
From the weighted $L^2$ stability in Theorem \ref{thm2.3},
    it is easy to show the $L^2$ stability under the same assumption:
$$
\|U_h(t)\|^2_{L^2(\mathbb{R}^-)}+\|u_h(t)\|^2_{L^2(\mathbb{R}^+)} \leq C(T) \left(\|U_h(0)\|^2_{L^2(\mathbb{R}^-)} + \|u_h(0)\|^2_{L^2(\mathbb{R}^+)}\right),
$$
for any $t\in[0,T]$ with $C=C(T)>0$.
\end{remark}

\section{Applications and numerical examples}\label{sec:numerical-example}

In this section, we explicitly derive the coupling conditions for the linearized Carleman model and the Grad's moment system. We validate our analysis and the effectiveness of the DG scheme using several numerical examples.

\subsection{Carleman model}
The interface problem for Carleman model \cite{carleman1957problemes} reads as 
\begin{align*}
&\partial_t f_+ + v \partial_x f_+ = \frac{1}{\epsilon(x)}(f_-^2-f_+^2)\\[2mm]
&\partial_t f_- - v \partial_x f_- = \frac{1}{\epsilon(x)}(f_+^2-f_-^2).
\end{align*}
Here $v>0$ is a constant, $\epsilon(x)=1$ for $x<0$ and $\epsilon(x)=\epsilon>0$ is a small constant for $x>0$.	 
Let $\rho=f_+ + f_-$ and $q= f_--f_+$, we can rewrite the above system as
$$
\partial_t\begin{pmatrix}
\rho \\[1mm]
q
\end{pmatrix} + \begin{pmatrix}
0 & -v \\[1mm]
-v & 0
\end{pmatrix} \partial_x\begin{pmatrix}
\rho \\[1mm]
q
\end{pmatrix} = \begin{pmatrix}
0 & 0  \\[1mm]
0 & -2\rho
\end{pmatrix} \begin{pmatrix}
\rho \\[1mm]
q
\end{pmatrix}/\epsilon(x).
$$
Considering its linearized version at $\rho=\rho_*$ with $\rho_*>0$ a constant, we obtain 
\begin{equation}\label{original-Carlman}
	U_t + AU_x = QU/\epsilon(x)
\end{equation}
with 
$$
U = \begin{pmatrix}
\rho \\[1mm]
q
\end{pmatrix}, \qquad 
A = \begin{pmatrix}
0 & -v \\[1mm]
-v & 0
\end{pmatrix}, \qquad 
Q = \begin{pmatrix}
0 & 0 \\[1mm]
0 & -2\rho_*
\end{pmatrix}.
$$

In our numerical test, we take $v=1$ and $\rho_*=1/2$. In the domain decomposition (DD) method, we solve 
\begin{equation}\label{eq:carleman-left}
    U^l_t + AU^l_x = QU^l     
\end{equation}
on the left domain 
and the corresponding equilibrium system 
\begin{equation}\label{eq:carleman-right}
    \partial_t\rho^r = 0
\end{equation}
on the right domain. To explicitly derive the coupling condition, we compute each term in \eqref{coco} and the result is listed in Table \ref{table:1}.
\begin{table}[!hbp]
\centering
\begin{tabular}{c|c|c|c|c|c|c|c|c}
\hline
&&&&&&&\\[-1em]
$R_+$ & $R_-$ & $P_+$ & $P_-$ & $P_0$ & $R_S$ & $K$ & $\tilde{K}$ & N\\ 
\hline
&&&&&&&\\[-1em]
$(1,-1)^T$ & $(1,1)^T$ & --- & --- & $1$ & --- & $-1$ & --- & ---\\ 
\hline
\end{tabular}
\caption{Carleman model: matrices in the coupling condition \eqref{coco}.}
\label{table:1}
\end{table}
Substituting the matrices in Table \ref{table:1} into the coupling condition \eqref{coco}, we arrive at
$$
\begin{pmatrix}
-1 & 1 \\[2mm]
-1 & 0
\end{pmatrix}
\begin{pmatrix}
(\rho^l+q^l)/2 \\[2mm]
\hat{w}
\end{pmatrix} = \frac{1}{2}\begin{pmatrix}
1 \\[2mm]
-1
\end{pmatrix} (\rho^l-q^l) - \begin{pmatrix}
\rho^r \\[2mm]
0
\end{pmatrix}.
$$
From the above equation, we can solve out
\begin{equation}\label{eq:carleman-coupling}
  q^l = 0  
\end{equation}
Clearly, no boundary condition is needed for \eqref{eq:carleman-right} on the right domain $x>0$ and the boundary condition $q^l=0$ is given for \eqref{eq:carleman-left} on the left domain $x<0$.

We use this simple example to test the convergence rate with respect to $\epsilon$ which is proved in Theorem \ref{thm2.3}. To this end, we will compute the solution $(\rho^{\epsilon}, q^{\epsilon})$ to the original problem \eqref{original-Carlman} and the solution $(\rho^D, q^D)$ to the coupling problem \eqref{eq:carleman-left}-\eqref{eq:carleman-right}-\eqref{eq:carleman-coupling}. We take the initial data as
$$
\rho(x,0)=\sin(x)+1,\quad q(x,0)=0.
$$
In this example, we use the first-order upwind scheme in space and the forward Euler in time to solve both \eqref{original-Carlman} and \eqref{eq:carleman-left}-\eqref{eq:carleman-coupling}.
The computational domain is $[-0.4,0.4]$. To reduce the numerical error, we take the mesh size small enough $\Delta x=4\times 10^{-6}$. The solution is computed up to $t=0.2$. Since we only care about the interface phenomena, we will neglect the effects of the boundaries at $x=\pm 0.4$ and only focus on the solutions in the inner domain $[-0.1, 0.1]$.

The numerical solutions for $-0.1\le x \le 0.1$ are plotted in Figure \ref{fig1}. The dashed line represents for the solution of \eqref{eq:carleman-left}-\eqref{eq:carleman-coupling} by the domain decomposition (DD) method and the solid lines represent for the solutions to the original problem \eqref{original-Carlman} with different values of $\epsilon$. The figure shows that, as $\epsilon$ goes to zero, the solution of \eqref{original-Carlman} converges to the solution of the DD method. Moreover, from this figure, we see that there only exist $O(\sqrt{\epsilon})$ boundary-layers for the first variable $\rho$. In fact, recall Proposition \ref{prop2.1} that: the $O(\sqrt{\epsilon})$ boundary layers are in the formulation of $
\begin{pmatrix}
P_0\\
0
\end{pmatrix}
\hat{w}$ and the $O(\epsilon)$ boundary layers are $\begin{pmatrix}
N\\
\tilde{K}
\end{pmatrix}\tilde{w}$. By the calculations of $P_0$, $N$ and $\tilde{K}$ in Table \ref{table:1}, we know that there is no $O(\epsilon)$ boundary-layer and there exists $O(\sqrt{\epsilon})$ boundary-layer only for the first variable. 

\begin{figure}
\begin{minipage}{0.5\linewidth}
\centering
\includegraphics[width=3.0in]{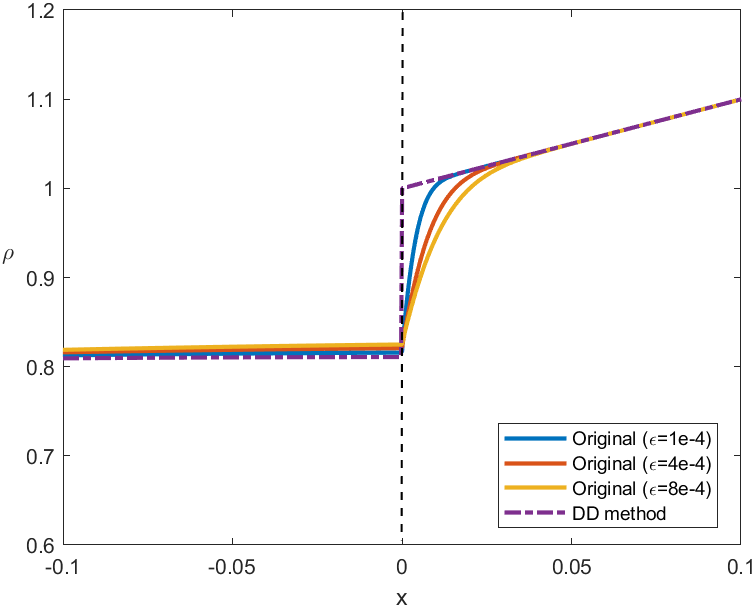}
\end{minipage}
\begin{minipage}{0.5\linewidth}
\centering
\includegraphics[width=3.0in]{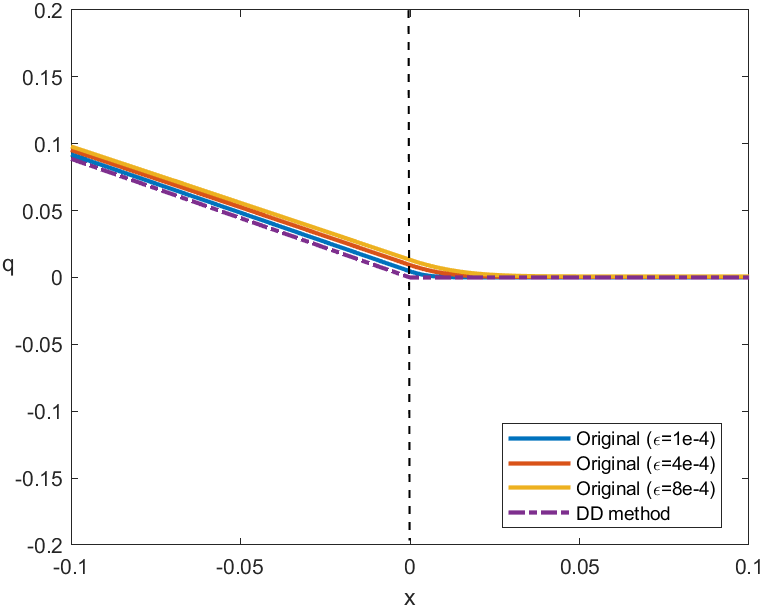}
\end{minipage}
\caption{Carleman model: numerical solutions of $\rho$ and $q$ for the DD method and the original problem with different values of $\epsilon$.} 
\label{fig1}
\end{figure}

In Table \ref{table:convergence-epsilon}, we show the $L^2$ errors between the solution of the original problem $(\rho^\epsilon,q^\epsilon)$ and the solution of the DD method $(\rho^D,q^D)$ on the domain $[-0.1,0.1]$ with different values of $\epsilon$. From Table \ref{table:convergence-epsilon}, we notice that $\|\rho^\epsilon-\rho_D\|_{L^2}=O(\epsilon^{1/4})$ and $\|q^\epsilon-q_D\|_{L^2}=O(\epsilon^{1/2})$, which verify our analysis in Theorem \ref{thm2.3} and Remark \ref{rmk25}.
\begin{table}[H]
\centering
\begin{tabular}{c|c|c|c|c}
\hline
&&&&\\[-1em]
$\epsilon$&8e-4&4e-4&2e-4&1e-4\\
\hline
&&&&\\[-1em]
$\|\rho^\epsilon-\rho^D\|_{L^2}$&1.32e-2&1.12e-2&9.52e-3&8.06e-3\\
\hline
&&&&\\[-1em]
convergence order&--- & 0.24 & 0.24  &0.24\\
\hline
&&&&\\[-1em]
$\|q^\epsilon-q^D\|_{L^2}$&4.30e-2&3.01e-2&2.11e-2&1.47e-2\\
\hline
&&&&\\[-1em]
convergence order&---& 0.52 & 0.51 & 0.51\\
\hline
\end{tabular}
\caption{Carleman model: errors between $(\rho^\epsilon,q^\epsilon)$ and $(\rho^D,q^D)$ and convergence rates with respect to $\epsilon$.}
\label{table:convergence-epsilon}
\end{table}

\subsection{Grad's moment system}
We consider the linearized Grad's  moment system in 1D \cite{CFL,ZYL,Grad}:
\begin{equation}\label{moment-eq}
	U_t + A U_x = QU/\epsilon(x)
\end{equation}
with 
$$
U=
\begin{pmatrix}
\rho\\[2mm]
w\\[2mm]
\theta/\sqrt{2}\\[2mm]
\sqrt{3!}f_3\\ 
\vdots\\[2mm]
\sqrt{M!}f_M
\end{pmatrix},\quad
A = 
\begin{pmatrix}
~0 & ~1 & ~~ & ~~ & ~~ & ~~~\\[2mm]
~1 & ~0 & \sqrt{2} & ~~ & ~~ & ~~~\\[2mm]
~~ & \sqrt{2} & ~0 & \sqrt{3}~ & ~~ & ~~~\\ 
~~ & ~~ & \sqrt{3} & ~0~ & \ddots & ~~~\\ 
~~ & ~~ & ~~ & \ddots & ~0 & \sqrt{M} \\[2mm]
~~ & ~~ & ~~ & ~ & \sqrt{M} & 0 
\end{pmatrix},\quad Q= -\text{diag}(0,0,0,\underbrace{1,...,1}_{M-2}).
$$
In the above equation, $\rho$ is the density, $w$ is the macro velocity, $\theta$ is the temperature and $f_3,...,f_M$ with $M\ge 3$ are high order moments. The moment system is obtained by taking moments on the both sides of the Bhatnagar-Gross-Krook (BGK) model. Here we only consider its linearized one-dimensional version. For the sake of invertibility of $A$ in \eqref{moment-eq}, we always assume $M$ to be odd.
As to the interface problem, the parameter $\epsilon(x)=1$ for $x<0$ and $\epsilon(x)=\epsilon\ll 1$ for $x>0$. 

In the DD method, we solve 
\begin{equation}\label{eq:moment-left}
    \partial_tU^l + A\partial_xU^l = QU^l   
\end{equation}
with 
$$
U^l=(\rho^l,w^l,\theta^l/\sqrt{2},\sqrt{3!}f^l_3,...,\sqrt{M!}f^l_M)^T
$$ 
for $x<0$ and solve the corresponding equilibrium system 
\begin{equation}\label{eq:moment-right}
\partial_t u^r + 
A_{11}
\partial_x
u^r = 0  
\end{equation}
with 
$$
u^r=
\begin{pmatrix}
\rho^r \\[1mm]
w^r \\[1mm]
\theta^r/\sqrt{2}
\end{pmatrix}
,\qquad
A_{11}=
\begin{pmatrix}
0 & 1 & 0 \\[1mm]
1 & 0 & \sqrt{2} \\[1mm]
0 & \sqrt{2} & 0
\end{pmatrix}
$$
for $x>0$. To derive the coupling conditions between \eqref{eq:moment-left} and \eqref{eq:moment-right}, we compute each term in \eqref{coco} as 
$$
P_0=\begin{pmatrix}
\frac{\sqrt{6}}{3} & ~0 & -\frac{\sqrt{3}}{3}
\end{pmatrix}^T,\quad
P_- = \begin{pmatrix}-\frac{\sqrt{6}}{6} & \frac{\sqrt{2}}{2} & -\frac{\sqrt{3}}{3}\end{pmatrix}^T,\quad 
P_+ = \begin{pmatrix}\frac{\sqrt{6}}{6} & \frac{\sqrt{2}}{2} & \frac{\sqrt{3}}{3}\end{pmatrix}^T
$$
and
$$
K = A_{12}^TP_0 = (-1,\underbrace{0,...,0}_{M-3})^T,\qquad \tilde{K} = (0,I_{M-3})^T,\qquad N = \begin{pmatrix}
 \frac{2\sqrt{6}}{3} & 0 & \cdots & 0\\[1mm] 
 0 & 0 & \cdots & 0\\[1mm]
 -\frac{2\sqrt{3}}{3} & 0 & \cdots & 0
\end{pmatrix}.
$$
Moreover, in order to obtain $R_S$, we compute
$$
\tilde{K}^TX\tilde{K} = 
\begin{pmatrix}
 ~0 & \sqrt{5}~ & ~~ & ~~~\\ 
 \sqrt{5} & ~0~ & \ddots & ~~~\\ 
~~ & \ddots & ~0 & \sqrt{M} \\[2mm]
~~ & ~ & \sqrt{M} & 0 
\end{pmatrix},\qquad \tilde{K}^TS\tilde{K} = -I_{M-3}.
$$
By our construction, $R_S$ is the right-stable matrix for $(\tilde{K}^TX\tilde{K})^{-1}\tilde{K}^TS\tilde{K}=-(\tilde{K}^TX\tilde{K})^{-1}$. Thus we can take $R_S$ as the eigenvectors of $\tilde{K}^TX\tilde{K}$ associated with its positive eigenvalues. {From the above expression, one can check that the matrix $\tilde{K}^TX\tilde{K}$ has $\frac{M-3}{2}$ positive eigenvalues and thereby the matrix $R_S$ should be of size
$(M-3)\times\frac{M-3}{2}$.}

At last, we compute $R_+$ and $R_-$ which are eigenvectors of $A$ associated with its positive eigenvalues and negative eigenvalues. Note that the characteristic polynomial of $A$ is $(M+1)$-order Hermite polynomial $He_{M+1}(x)$. For each fixed $M$, we can explicitly express $R_+$ and $R_-$ in terms of the zeros of $He_{M+1}(x)$. We refer readers to \cite{CFL} for more details. At this point, the coefficient matrices in \eqref{coco} are explicitly computed and thus the coupling condition \eqref{eq:coupling-condition-B} is obtained.

Next we try to explicitly express the coupling condition in terms of the moment variables $(\rho^l,w^l,\theta^l,f_3^l,\dots,f_M^l)$ and $(\rho^r,w^r,\theta^r)$. 
Denote 
$$
\Phi=\begin{pmatrix}
P_-^T & ~0~ & ~0~\\[1mm]
0 & 1 & ~0~ \\[1mm]
0 & 0 & R_U^T
\end{pmatrix}
$$
with $R_U$ satisfying $R_U^TR_S=0$. Then it is easy to see that 
$$
\Phi\begin{pmatrix}
P_+ & P_0 & NR_S\\[2mm]
0 & 0 & \tilde{K}R_S
\end{pmatrix}
=
\Phi\begin{pmatrix}
P_+ & P_0 & NR_S\\[1mm]
0 & 0 & 0 \\[1mm]
0 & 0 & R_S
\end{pmatrix} = 0.
$$
Multiplying $\Phi$ on both sides of \eqref{coco} yields the boundary condition for $U^l$:
\begin{equation*}
\Phi U^l(0,t) = \begin{pmatrix}
\beta_- \\[1mm]
0
\end{pmatrix}.
\end{equation*}
Or equivalently, 
\begin{eqnarray}\label{moment:coup1}
\left\{\begin{array}{l}
\rho^l - \sqrt{3} w^l + \theta^l = \rho^r - \sqrt{3} w^r + \theta^r, \\[3mm]
f_3^l = 0,\\[3mm]
(\sqrt{4!}f^l_4,...,\sqrt{M!}f^l_M)R_U=0.
\end{array}\right.
\end{eqnarray}
Next, we multiply $(P_+^T, \underbrace{0,...,0}_{M-2})$ on both sides of \eqref{coco} to obtain:
$$
\beta_+ = P_+^T u^l(0,t).
$$
That is, 
\begin{equation}\label{moment:coup2}
	\rho^r + \sqrt{3} w^r + \theta^r = \rho^l + \sqrt{3} w^l + \theta^l.
\end{equation}
The relations \eqref{moment:coup1} and \eqref{moment:coup2} are coupling conditions for \eqref{eq:moment-left} and \eqref{eq:moment-right}. 
For $M=5$, the right-stable matrix $R_S=(1,1)^T$ and thereby $R_U = (1,-1)^T$. In this case, the coupling condition reads as 
\begin{eqnarray}\label{Moment-coup}
\left\{\begin{array}{l}
\rho^l - \sqrt{3} w^l + \theta^l = \rho^r - \sqrt{3} w^r + \theta^r, \\[3mm]
f_3^l = 0,\\[3mm]
f_4^l - \sqrt{5}f^l_5=0, \\[3mm]
\rho^r + \sqrt{3} w^r + \theta^r = \rho^l + \sqrt{3} w^l + \theta^l.
\end{array}\right.
\end{eqnarray}

Next we solve the interface problem \eqref{eq:moment-left}-\eqref{eq:moment-right}-\eqref{Moment-coup} for the moment closure system with $M=5$. We use the DG scheme presented in the previous section for the spatial discretization and the third-order Runge-Kutta (RK) method \cite{shu1988efficient} for the time discretization.  
The initial data are given by
\begin{align*}
(\rho, w, \theta)(x,0) = (\sin(2x)+1.1, ~0, ~\sqrt{2}),\qquad (f_3,f_4,f_5)(x,0)=(0,0,0).
\end{align*}
We take the polynomial degree in the DG scheme to be $2$. The computational domain is $[-2\pi,2\pi]$. Similar to the treatment in the Carleman model, we avoid the boundary effects at $x=\pm 2\pi$ by truncating to a smaller domain. In the numerical tests, we take the CFL number to be $0.17$ for \eqref{eq:moment-left}.

We firstly test the convergence rates of the RKDG scheme solving DD problem by comparing the numerical solutions at $t=0.5$ with the reference solution (solved by RKDG itself with a refined mesh $\Delta x=\pi/640$). In Table \ref{table3}, $U^l$ and $u^r$ are the numerical solutions of \eqref{eq:moment-left} and \eqref{eq:moment-right}, $U^l_e$ and $u^r_e$ are reference solutions of \eqref{eq:moment-left} and \eqref{eq:moment-right}. The $L^2$-norm is computed over the domain $[-2\pi/3,2\pi/3]$. From Table \ref{table3}, we can see that the RKDG scheme achieves the third-order accuracy.
\begin{table}[H]
\centering
\begin{tabular}{c|c|c|c|c|c}
\hline
&&&&\\[-1em]
$\Delta x$ & $\pi/10$ & $\pi/20$ & $\pi/40$ & $\pi/80$ & $\pi/160$\\ 
\hline
&&&&\\[-1em]
$\|U^l-U^l_e\|_{L^2}$ & 5.04e-3 & 6.78e-4 & 8.52e-05 & 1.08e-05 & 1.36e-06\\
\hline
&&&&\\[-1em]
Order & --- & 2.89 & 2.99 & 2.98 & 2.98\\
\hline
&&&&\\[-1em]
$\|u^r-u^r_e\|_{L^2}$ & 9.77e-4 & 1.23e-4 &	1.54e-05 & 1.93e-06	& 2.41e-07\\
\hline
&&&&\\[-1em]
Order & --- & 2.99 & 3.00 & 3.00 & 3.00\\
\hline
\end{tabular}
\caption{Grad’s moment system: $L^2$ errors and convergence rates of the RKDG scheme.}
\label{table3}
\end{table}

\begin{figure}
\begin{minipage}{0.5\linewidth}
\centering
\includegraphics[width=3.0in]{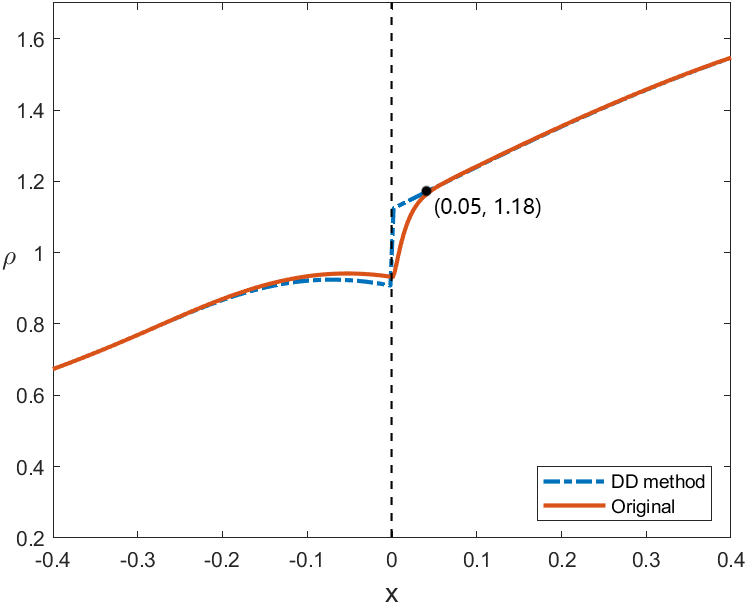}
\end{minipage}%
\begin{minipage}{0.5\linewidth}
\centering
\includegraphics[width=3.0in]{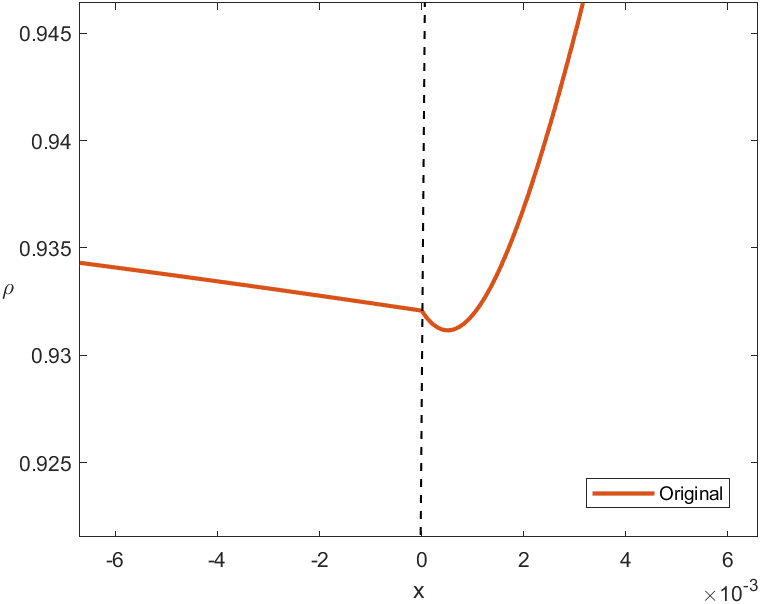}
\end{minipage}
\begin{minipage}{0.5\linewidth}
\centering
\includegraphics[width=3.0in]{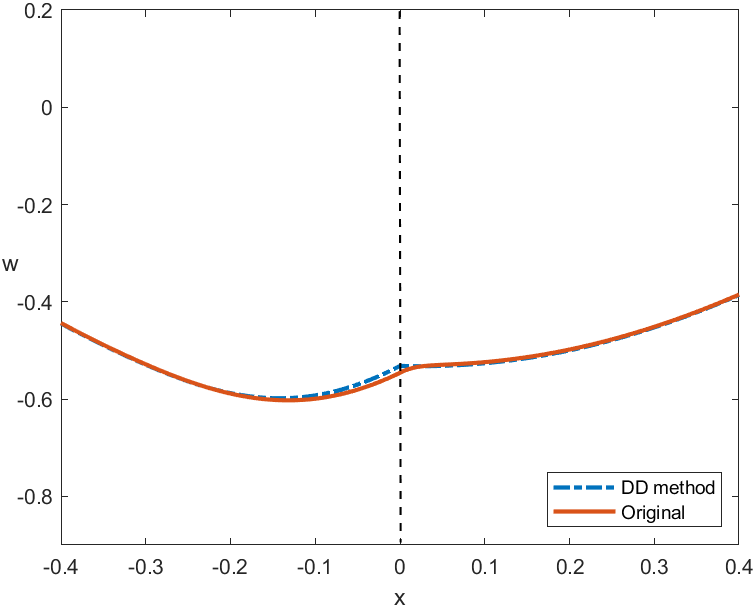}
\end{minipage}%
\begin{minipage}{0.5\linewidth}
\centering
\includegraphics[width=3.0in]{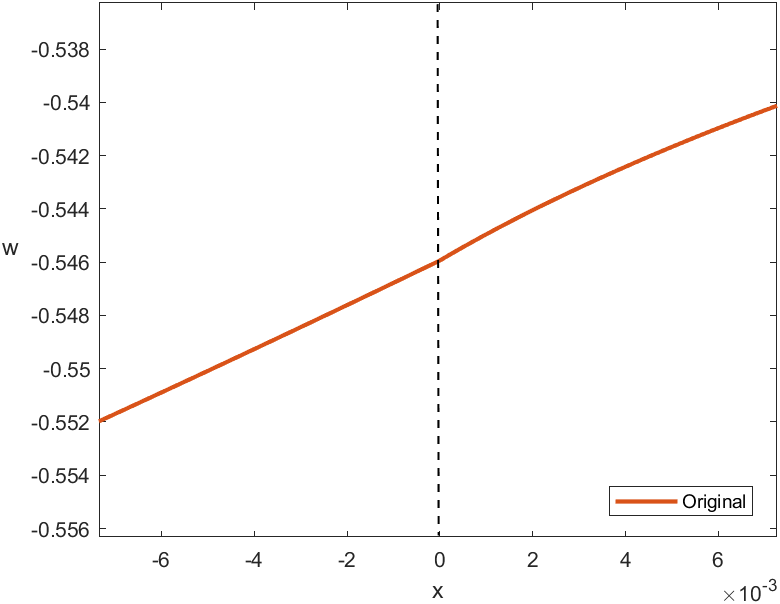}
\end{minipage}
\begin{minipage}{0.5\linewidth}
\centering
\includegraphics[width=3.0in]{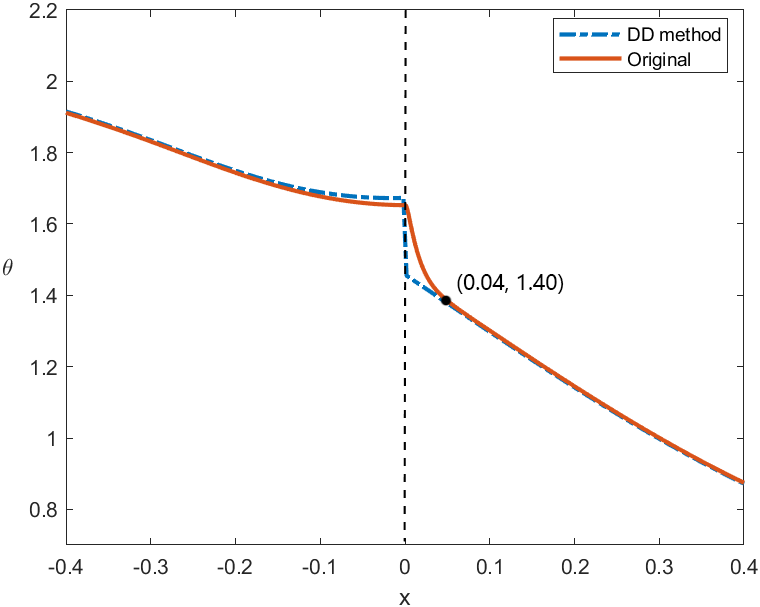}
\end{minipage}%
\begin{minipage}{0.5\linewidth}
\centering
\includegraphics[width=3.0in]{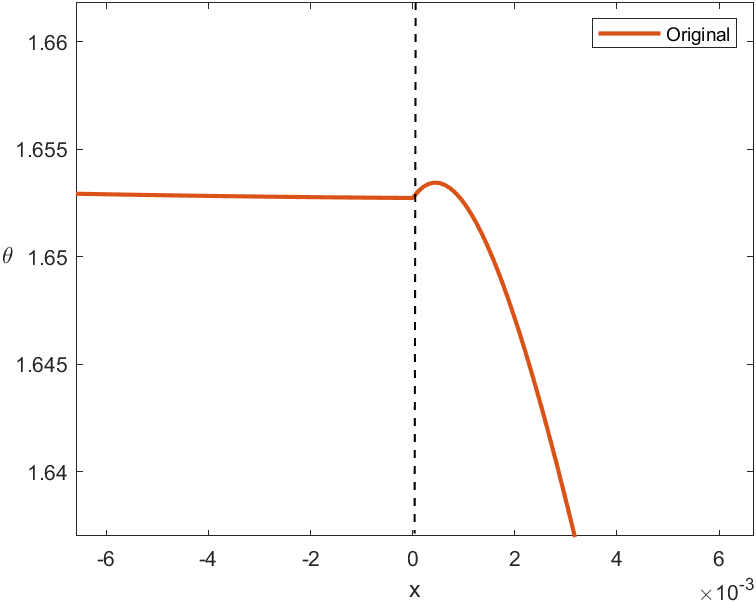}
\end{minipage}
\caption{Grad’s moment system: numerical solution of $(\rho,w,\theta)$. The first, second and third row represent for $\rho$, $w$ and $\theta$ respectively. The figures on the right are enlarged to capture the boundary-layers in the scale $O(\epsilon)$}
\label{fig2}
\end{figure}

Furthermore, we plot the numerical solutions of the original problem \eqref{moment-eq} and the solutions obtained by the domain decomposition (DD) method \eqref{eq:moment-left}-\eqref{eq:moment-right}-\eqref{Moment-coup}. In the original problem, the parameter $\epsilon$ is taken as $10^{-3}$. The equation \eqref{moment-eq} is solved by the first-order upwind scheme in space and the forward Euler method in time with sufficiently small mesh size $\Delta x = 10^{-4}$ and the CFL number to be $0.67$. 
The coupling problem \eqref{eq:moment-left}-\eqref{eq:moment-right}-\eqref{Moment-coup} is solved by the DG method with $\Delta x=\pi/80$. The results are plotted in Figure \ref{fig2}. The red solid line represents for the solution of the original problem while the blue dashed line represents for the solution of the DD method. 
The solutions in the computational domain are presented in the left part, while the right part are the zoom of the solutions near the interface $x=0$.

From the left figures, we see that the numerical solutions obtained from the DD method approximate well
to the original problem \eqref{moment-eq} for $\epsilon=10^{-3}$. In particular, the variables $\rho$ and $\theta$ allow $O(\sqrt{\epsilon})$ boundary-layers. 
From the right figures, we see that there also exist $O(\epsilon)$ boundary-layers for $\rho$ and $\theta$. Note that the limit for the $x$-axis is in the order of $10^{-3}$. Recall the theoretical result in Proposition \ref{prop2.1}: the $O(\sqrt{\epsilon})$ boundary layers are in the formulation of $P_0\hat{w}$ and the $O(\epsilon)$ boundary layers are $N\tilde{w}$. From the calculations of $P_0$ and $N$, we see that  there is indeed no boundary-layer for the second variable but exist boundary-layers for the first and the third variables.




\newpage
\begin{appendices}
\section{Review of the Generalized Kreiss Condition}\label{appedix:GKC}
In this appendix, we briefly review the Generalized Kreiss Condition (GKC) for initial-boundary value problems (IBVPs) for hyperbolic relaxation systems \eqref{problem} which was introduced in \cite{Y2}. More details can be found in \cite{Y2,ZY}.

The motivation of the GKC is to guarantee the existence of zero-relaxation limit for the IBVPs for hyperbolic relaxation systems. It was observed in \cite{Y2} that the structural stability condition \cite{Y1} for the Cauchy problem is not sufficient to guarantee the existence.
Consider the following linear IBVP with constant coefficients
\begin{eqnarray}\label{Append-IBVP}
\left\{ 
\begin{array}{l}
\partial_tU+A\partial_xU=QU/\epsilon+\bar{Q}U,\qquad x>0,\\[3mm]
BU(0,t)=b(t),\\[3mm]
U(x,0)=U_0(x).
\end{array}\right.	
\end{eqnarray}
The coefficient matrices $A$, $Q$ are assumed to satisfy the structural stability condition \cite{Y1}. Without loss of generality, we may assume that $A$ and $Q$ are both symmetric and $Q=\textrm{diag}(0,S)$ with $S<0$. Corresponding to the partition of $Q$, we write
$$
A=
\begin{pmatrix}
A_{11} & A_{12} \\[1mm]
A_{12}^T & A_{22}
\end{pmatrix}
$$
Here we only discuss the case with non-characteristic boundary, i.e., $A$ is invertible in \eqref{Append-IBVP}. Note that the boundary may be characteristic for the reduced system. Namely, there could exist zero eigenvalues for $A_{11}$.
The GKC for \eqref{Append-IBVP} reads as
\begin{definition}[{Generalized Kreiss condition} \cite{Y2}] The boundary matrix $B$ in \eqref{Append-IBVP} satisfies the GKC, if there exist a constant $c_K>0$, such that
$$
|\det\{BR_M^S(\xi,\eta)\}|\geq c_K,
$$
for all $\eta\geq 0$ and   $\xi$ with $\textrm{Re}(\xi)>0$.
Here $R_M^S(\xi,\eta)$ is the right-stable matrix for $M(\xi,\eta):=A^{-1}(\eta Q-\xi I)$.
\end{definition}
\noindent Note that the right-stable matrix is defined by
\begin{definition}[Right-stable/unstable matrix]
Let the matrix $A\in\mathbb{R}^{n\times n}$ have precisely $k\ (0\leq k\leq n)$ stable
eigenvalues (i.e. eigenvalues with negative real parts). The full-rank matrix $R^S \in \mathbb{R}^{n\times k}$
is called a right-stable matrix of $A$ if
$$AR^S=R^SS$$
where $S$ is a $k\times k$ stable matrix. Similarly we introduce the right-unstable matrix $R^U$.
\end{definition}

Having the GKC, a natural question is how to check this condition. It was proved in \cite{Y2} that the following strictly dissipative condition implies the GKC:
\begin{definition}[{Strictly dissipative condition}]
The boundary matrix $B$ in \eqref{Append-IBVP} satisfies the strictly dissipative condition, if there is a constant $c>0$ such that 
$$
y^TAy\leq -c|y|^2+c^{-1}|By|^2 
$$
for all $y \in \mathbb{R}^n$.
\end{definition} 

In fact, in the proof of Theorem \ref{coth}, we need to use the following proposition.
\begin{prop}\label{prop:appendix-results}
The following results hold:
\begin{itemize}
\item[(i)] The eigenvector matrix $R_+^T$ of $A$ defined in \eqref{eq:R+R-} satisfies the GKC.
\item[(ii)] Taking $\eta\rightarrow \infty$ and $\xi=1$ in $R_M^S(\xi,\eta)$ in the GKC, we have the expression
$$
R_M^S(1,\infty)=
\begin{pmatrix}
P_+ & P_0 & NR_S\\[2mm]
0 & 0 & \tilde{K}R_S
\end{pmatrix},
$$
where the matrices $P_+$, $P_0$, $N$, $\tilde{K}$ and $R_S$ are defined in Section \ref{section2.2}.
\end{itemize}
\end{prop}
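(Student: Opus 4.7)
For part (i), the plan is to verify the strictly dissipative condition for $B = R_+^T$ and invoke the implication ``strict dissipativity $\Rightarrow$ GKC'' from \cite{Y2} recalled just above. Writing $y = R_+ y_+ + R_- y_-$ with $y_\pm = R_\pm^T y$, one has $y^T A y = y_+^T \Lambda_+ y_+ + y_-^T \Lambda_- y_-$. Positive-definiteness of $\Lambda_+$ and negative-definiteness of $\Lambda_-$ immediately yield $y^T A y \le -c|y|^2 + c^{-1}|R_+^T y|^2$ for a constant $c > 0$ depending only on the spectrum of $A$, which is the strictly dissipative condition and hence gives (i).

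For part (ii), I would identify the $n^+$ stable eigenvectors of $M(1,\eta) = A^{-1}(\eta Q - I)$ as $\eta \to \infty$ by matched asymptotics, splitting $v = (v_1, v_2)$ according to $Q = \mathrm{diag}(0, S)$. First I would observe that under the structural stability condition the number of stable eigenvalues is constant on $\{\mathrm{Re}\,\xi > 0\} \times [0,\infty)$ (no eigenvalue crosses the imaginary axis, as in \cite{Y2,ZY}) and equals $n^+$ at $\eta = 0$ since $-A^{-1}$ has exactly $n^+$ stable eigenvalues. The eigenproblem $(\eta Q - I)v = \mu A v$ then exhibits three natural scales $\mu = O(1)$, $O(\sqrt\eta)$, $O(\eta)$ that mirror the three-scale boundary-layer structure of Proposition \ref{prop2.1}, with respective mode counts $n_1^+$, $n_1^o$, $n^+ - n_1^+ - n_1^o$.

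At the $O(1)$ scale the leading-order balance forces $v_2 \to 0$ and $A_{11} v_1 = -\mu^{-1} v_1$, producing the columns $\binom{P_+}{0}$. At the $O(\sqrt\eta)$ scale a Fredholm solvability condition on the next-order correction reduces to an eigenproblem for $K^T S^{-1} K$ whose eigenvectors $w$ lift to $\binom{P_0 w}{0}$, giving the block $\binom{P_0}{0}$. At the $O(\eta)$ scale the first-row constraint $A_{11} v_1 + A_{12} v_2 = 0$ combined with solvability forces $v_2 \in \mathrm{span}(\tilde K)$, say $v_2 = \tilde K \tilde w$; the remaining eigenproblem reduces to $(\tilde K^T X \tilde K)^{-1}(\tilde K^T S \tilde K)\tilde w = \tilde\mu \tilde w$ with stable eigenspace $\mathrm{span}(R_S)$, and the $\mathrm{span}(P_0)$-component of $v_1$ is pinned down by projecting the second-row equation onto $\mathrm{span}(K)$, yielding after simplification the block $\binom{N R_S}{\tilde K R_S}$.

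The main obstacle will be the $O(\eta)$-scale analysis, which is genuinely singular because $A_{11}$ need not be invertible. There one must carefully decompose $v_1$ into its $\mathrm{span}(P_0)$ part and its complement, apply the Fredholm alternative at each order in the $1/\eta$ expansion, and verify that the resulting slave map from $\tilde w$ to the $P_0$-component of $v_1$ coincides exactly with the explicit formula for $N$ in Proposition \ref{prop2.1}. This computation is the content of the boundary-layer construction in \cite{ZY}, so the cleanest route is to invoke that calculation rather than rederive it.
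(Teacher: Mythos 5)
Your proposal is correct and follows essentially the same route as the paper: part (i) is proved by verifying the strictly dissipative condition for $B=R_+^T$ (your quadratic-form inequality is exactly the paper's positive-definiteness computation in the eigenbasis of $A$) and then invoking the implication to the GKC from \cite{Y2}. For part (ii), your three-scale eigenvalue asymptotics is a faithful sketch of the structure behind the formula, but like the paper you ultimately defer the hard $O(\eta)$-scale computation of $N$ and $R_S$ to the analysis in \cite{ZY}, which is precisely what the paper's proof does.
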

\begin{proof}
\begin{itemize}
\item[(i)]
Recall the definition of $R=(R_+,R_-)$. 
It is easy to see that
\begin{equation*}
  R_+R_+^T - c R_+ \Lambda_+ R_+^T - c R_- \Lambda_- R_-^T - c^2 I\\[2mm]
= 
R \begin{pmatrix}
I - c \Lambda^+ & \\[1mm]
                & - c \Lambda^-
\end{pmatrix}
R^T - c^2 I
\end{equation*}
is positive definite when $c$ is sufficiently small. 
Thus $R_+^T$ indeed satisfies the strictly dissipative condition and thereby satisfies the GKC, i.e.,  $|\det\{BR_M^S(\xi,\eta)\}|\geq c_K$ for any $\xi$ with $Re\xi>0$ and $\eta\geq 0$ with some $c_K>0$.
\item[(ii)]
The proof to the second result is obtained by exploiting the analysis of $R_M^S(\xi,\eta)$ in \cite{ZY} and the interested readers may refer to this paper for more details.
\end{itemize}
\end{proof}

Moreover, the proof of Theorem \ref{thm2.3} uses the following proposition:
\begin{prop}\label{prop:appendix-estimate}
Assume that $A$, $Q$, $\bar{Q}$ in \eqref{Append-IBVP} are symmetric, $Q=\text{diag}(0,S)$ with $S<0$ and $A$ is invertible. Moreover, assume that the GKC holds and $U_0(x)\equiv 0$.
For sufficiently small $\epsilon<\epsilon_0\ll 1$, the solution to \eqref{Append-IBVP} satisfies the following estimate 
$$
\|U(\cdot,t)\|^2_{L^2(\mathbb{R}^+)} + \int_0^T|U(0,t)|^2dt \leq C \int_0^T|b(t)|^2dt .
$$
Here, the constant $C>0$ is independent of $\epsilon$.
\end{prop}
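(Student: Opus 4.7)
The plan is to establish the estimate via the Laplace transform technique that is standard in the Kreiss theory of hyperbolic IBVPs, with the relaxation parameter $\epsilon$ controlled uniformly through the GKC. A direct energy estimate on \eqref{Append-IBVP} cannot work: the source $QU/\epsilon$ produces dissipation only in the last $m$ components, so the boundary data $b(t)$ can never be recovered by integration by parts alone. The GKC naturally enters via the Laplace symbol and provides the uniform-in-$\epsilon$ control of $\hat U(0,\xi)$ in terms of $\hat b(\xi)$.

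First I would apply the Laplace transform in $t$: for $\xi = \gamma + i\omega$ with $\gamma > 0$, let $\hat U(x,\xi) = \int_0^\infty e^{-\xi t} U(x,t)\,dt$. Since $U_0 \equiv 0$, the IBVP \eqref{Append-IBVP} becomes the $x$-ODE boundary value problem
$$
\partial_x \hat U = \tilde M(\xi,\eta)\hat U, \qquad B\hat U(0,\xi) = \hat b(\xi), \qquad \hat U(\cdot,\xi) \in L^2(\mathbb{R}^+),
$$
where $\eta = 1/\epsilon$ and $\tilde M(\xi,\eta) = A^{-1}(\eta Q - \xi I + \bar Q)$ is a bounded perturbation of the matrix $M(\xi,\eta)$ appearing in the GKC. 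By the structural stability condition, for $\mathrm{Re}(\xi)>0$ and $\eta \ge 0$ the matrix $M(\xi,\eta)$ has no purely imaginary eigenvalues, so the stable/unstable splitting is smooth and persists under the bounded perturbation $A^{-1}\bar Q$ when $\epsilon$ is small. The $L^2(\mathbb{R}^+)$ requirement then forces $\hat U(0,\xi) = R_{\tilde M}^S(\xi,\eta)\sigma$ for some $\sigma$. Substituting into the boundary relation gives $BR_{\tilde M}^S \sigma = \hat b(\xi)$, and the GKC bound $|\det\{BR_M^S(\xi,\eta)\}|\ge c_K$, again stable under the bounded perturbation, yields $|\sigma| \le C|\hat b(\xi)|$ uniformly in $\omega\in\mathbb{R}$ and in $\epsilon\in(0,\epsilon_0)$.

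Next I would upgrade this to the spatial $L^2$ bound. Writing $\hat U(x,\xi) = e^{x\tilde M(\xi,\eta)}R_{\tilde M}^S \sigma$ and exploiting the uniform spectral gap on the stable subspace (the slow eigenvalues inherit a negative real part from $\xi$, and the fast eigenvalues carry a large negative real part of order $\eta$ from the negative definite block $S$), one obtains
$$
\|\hat U(\cdot,\xi)\|_{L^2(\mathbb{R}^+)}^2 + |\hat U(0,\xi)|^2 \le C |\hat b(\xi)|^2,
$$
with $C$ independent of $\epsilon$ and of $\omega$. Parseval in $\omega$ then gives the weighted estimate with weight $e^{-2\gamma t}$ on both sides, and choosing $\gamma = 1/T$ and bounding the weight on $[0,T]$ produces the stated bound with a constant $C = C(T)$.

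The main obstacle is showing that the spectral gap of $\tilde M(\xi,\eta)$ on its stable subspace is \emph{uniform} in both $\omega = \mathrm{Im}(\xi)$ and in large $\eta = 1/\epsilon$. A portion of the eigenvalues scale like $\eta$ (the fast directions coming from $S<0$) while others remain bounded and only inherit their real part from $\xi$ and $\bar Q$; one must verify that no stable eigenvalue crosses the imaginary axis uniformly and that the fast/slow blocks can be decoupled after a bounded similarity transformation so that exponential decay of $e^{x\tilde M}R_{\tilde M}^S$ is preserved. This is precisely the structural analysis of $M(\xi,\eta)$ carried out in \cite{Y2,ZY}, culminating in the explicit form of $R_M^S(1,\infty)$ recorded in Proposition \ref{prop:appendix-results}(ii); once that structure is in hand, the Kreiss-symmetrizer construction goes through block-by-block and the rest of the argument is routine.
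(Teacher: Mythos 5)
Your first half is essentially the paper's argument: Laplace transform in $t$, use the $L^2(\mathbb{R}^+)$ requirement to place $\hat U(0,\xi)$ in the stable subspace, and invoke the GKC to bound $\hat U(0,\xi)$ by $\hat b(\xi)$, then Parseval for the boundary trace. Two points there deserve care. First, the uniformity in $\epsilon$ is not a generic ``bounded perturbation'' statement: for $|\xi|=O(1)$ the perturbation $A^{-1}\bar Q$ is \emph{not} small relative to the slow part of $M(\xi,\eta)$ (where $\eta Q$ vanishes), and the spectral gap of the slow stable eigenvalues shrinks as $\mathrm{Re}\,\xi\to0$, so persistence of the splitting and of the determinant bound is not automatic. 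The paper's mechanism is a homogeneity rescaling: with $s=\sqrt{\eta^2+|\xi|^2}\ge 1/\epsilon$ one writes $\partial_x\hat U=s\,\bar M(\xi',\eta',1/s)\hat U$ with $\bar M=A^{-1}(\eta'Q+\tfrac{1}{s}\bar Q-\xi'I)$, so the perturbation is $O(\epsilon)$ after normalization, and continuity of the right-stable matrix in $1/s$ at $0$ gives $|\det\{B\bar R^S_M\}|\ge \bar c_K$ uniformly. You should make this (or an equivalent) mechanism explicit rather than assert stability under an $O(1)$ perturbation.

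The genuine gap is in your second half. The claimed bound $\|\hat U(\cdot,\xi)\|^2_{L^2(\mathbb{R}^+)}+|\hat U(0,\xi)|^2\le C|\hat b(\xi)|^2$ with $C$ independent of $\xi$ is false: the slow stable eigenvalues of $\tilde M$ have real parts only of order $\mathrm{Re}\,\xi$, so $\|e^{x\tilde M}R^S_{\tilde M}\sigma\|^2_{L^2_x}$ behaves like $|\sigma|^2/\mathrm{Re}\,\xi$; the correct Kreiss-type resolvent estimate carries the factor $\mathrm{Re}\,\xi$ in front of $\|\hat U\|^2$. Even with that corrected, Parseval with $\gamma=1/T$ yields only a time-integrated interior bound $\int_0^T\|U(\cdot,t)\|^2_{L^2}\,dt\le C(T)\int_0^T|b|^2\,dt$, which is weaker than the stated pointwise-in-$t$ bound on $\|U(\cdot,t)\|^2_{L^2(\mathbb{R}^+)}$. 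The paper closes this differently, and your opening claim that energy methods ``cannot work'' misses this: the energy method is not used to recover $b$, but to convert the already-proved boundary trace estimate into an interior one. Multiplying the equation by $U^T$, using symmetry and $Q\le 0$, one gets $\frac{d}{dt}|U|^2+\partial_x(U^TAU)\le U^T\bar Q U$; integrating in $x$ produces a boundary flux bounded by $C|U(0,t)|^2$, and Gronwall gives $\max_{t\in[0,T]}\|U(\cdot,t)\|^2_{L^2(\mathbb{R}^+)}\le C(T)\int_0^T|U(0,t)|^2dt\le C(T)\int_0^T|b(t)|^2dt$. Without this (or some substitute giving control pointwise in time), your argument does not prove the proposition as stated.
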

\begin{proof}
We adapt the method in \cite{GKO} to obtain the estimate. Define the Laplace transform with respect to $t$:
$$
\hat{U}(x,\xi)=\int_0^{\infty}e^{-\xi t}U(x,t)dt,\qquad Re \xi>0. 
$$
Then we deduce that 
\begin{align}\label{5.16}
\left\{
{\begin{array}{*{20}l}
\vspace{2mm}\partial_{x}\hat{U} =A^{-1}\left(\eta Q + \bar{Q} - \xi I_n \right)\hat{U},\qquad \eta=1/\epsilon,\\[2mm]
\vspace{2mm} B\hat{U}(0,\xi)=\hat{b}(\xi),\\[2mm]
\|\hat{U}(\cdot,\xi)\|_{L^2(\mathbb{R}^+)}<\infty \quad \text{for ~a.e.} ~~\xi. 
\end{array}}
\right.
\end{align}
Let $s=\sqrt{\eta^2+|\xi|^2}$ and $\xi'=\xi/s$, $\eta'=\eta/s$, we can denote 
$$
\bar{M}(\xi',\eta',1/s)=A^{-1}\left(\eta' Q + \frac{1}{s}\bar{Q} - \xi' I_n \right)
$$ 
and rewrite the equation as 
$$
\partial_x\hat{U}=s\bar{M}(\xi',\eta',1/s)\hat{U}.
$$
Let $\bar{R}_M^S=\bar{R}_M^S(\xi',\eta',1/s)$ and $\bar{R}_M^U=\bar{R}_M^U(\xi',\eta',1/s)$ be the respective right-stable and right-unstable matrices of $\bar{M}=\bar{M}(\xi',\eta',1/s)$:
\begin{align*}
\bar{M}\bar{R}_M^S=\bar{R}_M^S\bar{M}^S,\qquad\qquad 
\bar{M}\bar{R}_M^U=\bar{R}_M^U\bar{M}^U,
\end{align*}
where $\bar{M}^S$ is a stable-matrix and $\bar{M}^U$ is an unstable-matrix. 
In view of the Schur decomposition, we may choose $\bar{R}_M^S$ and $\bar{R}_M^U$ such that
$\bar{R}_M^{S}\bar{R}_M^{S*} +\bar{R}_M^{U}\bar{R}_M^{U*} = I$. 
Then from \eqref{5.16} we obtain
\begin{align*}
\left({\begin{array}{*{20}c}
  \vspace{1.5mm} \bar{R}_M^{S*} \\
                 \bar{R}_M^{U*}
\end{array}}\right)\partial_{x}\hat{U}= 
  \left({\begin{array}{*{20}c}
  \vspace{1.5mm}\bar{M}^S & \\
                    & \bar{M}^U
\end{array}}\right)\left({\begin{array}{*{20}c}
  \vspace{1.5mm} \bar{R}_M^{S*} \\
                 \bar{R}_M^{U*}
\end{array}}\right)\hat{U}.
\end{align*}
Since $\|\hat{U}(\cdot,\xi)\|_{L^2(\mathbb{R}^+)}<\infty$ for a.e. $\xi$ and $\bar{M}^U$ is an unstable-matrix, it must be $\bar{R}_M^{U*}\hat{U}=0$ and the boundary condition in \eqref{5.16} becomes
\begin{align*}
B\hat{U}(0,\xi)= B\bar{R}_M^{S}\bar{R}_M^{S*}\hat{U}(0,\xi)=\hat{b}(\xi). 
\end{align*}

Recall the definition of $M(\xi,\eta)$ in the GKC, we know that $M(\xi',\eta')=\bar{M}(\xi',\eta',0)$. According to the GKC, we have 
$$
|\det\{B\bar{R}_M^S(\xi',\eta',0)\}|\geq c_K.
$$ 
Note that the right-stable matrix $\bar{R}_M^S(\xi',\eta',1/s)$ is continuous with respect to $1/s$ at $1/s=0$. 
For sufficiently small $\epsilon$, we know that $1/s$ is also sufficiently small and thereby
$$
|\det\{B\bar{R}_M^S(\xi',\eta',1/s)\}|\geq \bar{c}_K
$$ 
with $\bar{c}_K$ a constant for sufficiently small $\epsilon<\epsilon_0\ll1$. 
This means the matrix $(B\bar{R}_M^S)^{-1}$ is uniformly bounded and thereby
\begin{align*}
\left|\hat{U}(0,\xi)\right|^2=& \left|\bar{R}_M^{S}\bar{R}_M^{S*}\hat{U}(0,\xi)\right|^2 
=\left|\bar{R}_M^{S}(B\bar{R}_M^S)^{-1}\hat{b}(\xi)\right|^2
\leq C|\hat{b}(\xi)|^2.
\end{align*}
By Parseval's identity, the last inequality leads to 
\begin{align*} 
 \int_0^{\infty}e^{-2t Re\xi }|U(0,t)|^2dt 
\leq C \int_0^{\infty}e^{-2t Re\xi }\left|b(t)\right|^2dt 
\leq C \int_0^{\infty} \left|b(t)\right|^2dt \quad \text{for}~~Re\xi>0.
\end{align*}
Because the right-hand side is independent of $Re\xi$, we have 
$$
\int_0^{\infty} |U(0,t)|^2dt \leq C \int_0^{\infty} \left|b(t)\right|^2dt  .
$$
By using the trick from \cite{GKO}, the integral interval $[0,\infty)$ in the last inequality can be changed to $[0,T]$.

At last, we multiply the equation with $U^T$ from the left to get
\begin{align*}
\frac{d}{dt}|U|^2+ \partial_{x}(U^TAU)\leq U^T\bar{Q}U.
\end{align*}
Integrating the last inequality over $x\in[0,+\infty)$ and using Gronwall's inequality yields
\begin{align*}
\max_{t\in[0,T]}\|U(\cdot,t)\|_{L^2(\mathbb{R}^+)}^2\leq & ~ C(T)\int_0^{T} |U(0,t)|^2dt \leq C(T) \int_0^{T} \left|b(t)\right|^2dt.
\end{align*} 
\end{proof}

\end{appendices}

\bibliographystyle{abbrv}
\bibliography{ref}

\begin{thebibliography}{10}

\bibitem{BM}
G.~Bal and Y.~Maday.
\newblock Coupling of transport and diffusion models in linear transport
  theory.
\newblock {\em M2AN Math. Model. Numer. Anal.}, 36(1):69--86, 2002.

\bibitem{BLP}
A.~Bensoussan, J.-L. Lions, and G.~C. Papanicolaou.
\newblock Boundary layers and homogenization of transport processes.
\newblock {\em Publ. Res. Inst. Math. Sci.}, 15(1):53--157, 1979.

\bibitem{Benzoni}
S.~Benzoni-Gavage and D.~Serre.
\newblock {\em Multidimensional hyperbolic partial differential equations}.
\newblock Oxford Mathematical Monographs. The Clarendon Press, Oxford
  University Press, Oxford, 2007.
\newblock First-order systems and applications.

\bibitem{Besse}
C.~Besse, S.~Borghol, T.~Goudon, I.~Lacroix-Violet, and J.-P. Dudon.
\newblock Hydrodynamic regimes, {K}nudsen layer, numerical schemes: definition
  of boundary fluxes.
\newblock {\em Adv. Appl. Math. Mech.}, 3(5):519--561, 2011.

\bibitem{BLPQ}
J.-F. Bourgat, P.~Le~Tallec, B.~Perthame, and Y.~Qiu.
\newblock Coupling {B}oltzmann and {E}uler equations without overlapping.
\newblock In {\em Domain decomposition methods in science and engineering
  ({C}omo, 1992)}, volume 157 of {\em Contemp. Math.}, pages 377--398. Amer.
  Math. Soc., Providence, RI, 1994.

\bibitem{CFL}
Z.~Cai, Y.~Fan, and R.~Li.
\newblock Globally hyperbolic regularization of {G}rad's moment system.
\newblock {\em Comm. Pure Appl. Math.}, 67(3):464--518, 2014.

\bibitem{CFL2}
Z.~Cai, Y.~Fan, and R.~Li.
\newblock A framework on moment model reduction for kinetic equation.
\newblock {\em SIAM J. Appl. Math.}, 75(5):2001--2023, 2015.

\bibitem{carleman1957problemes}
T.~Carleman.
\newblock {\em Problemes math{\'e}matiques dans la th{\'e}orie cin{\'e}tique
  des gaz}, volume~2.
\newblock Almqvist \& Wiksells boktr., 1957.

\bibitem{CLL}
H.~Chen, Q.~Li, and J.~Lu.
\newblock A numerical method for coupling the {BGK} model and {E}uler equations
  through the linearized {K}nudsen layer.
\newblock {\em J. Comput. Phys.}, 398:108893, 25, 2019.

\bibitem{cockburn2001runge}
B.~Cockburn and C.-W. Shu.
\newblock Runge--kutta discontinuous galerkin methods for convection-dominated
  problems.
\newblock {\em Journal of scientific computing}, 16(3):173--261, 2001.

\bibitem{CJLW}
F.~Coquel, S.~Jin, J.-G. Liu, and L.~Wang.
\newblock Well-posedness and singular limit of a semilinear hyperbolic
  relaxation system with a two-scale discontinuous relaxation rate.
\newblock {\em Arch. Ration. Mech. Anal.}, 214(3):1051--1084, 2014.

\bibitem{Dafermos}
C.~M. Dafermos.
\newblock {\em Hyperbolic conservation laws in continuum physics}, volume 325
  of {\em Grundlehren der mathematischen Wissenschaften [Fundamental Principles
  of Mathematical Sciences]}.
\newblock Springer-Verlag, Berlin, 2000.

\bibitem{Dellacherie}
S.~Dellacherie.
\newblock Coupling of the {W}ang {C}hang-{U}hlenbeck equations with the
  multispecies {E}uler system.
\newblock {\em J. Comput. Phys.}, 189(1):239--276, 2003.

\bibitem{Gatignol}
R.~Gatignol.
\newblock {\em Th\'{e}orie cin\'{e}tique des gaz \`a r\'{e}partition discr\`ete
  de vitesses}.
\newblock Lecture Notes in Physics, Vol. 36. Springer-Verlag, Berlin-New York,
  1975.

\bibitem{GJL}
F.~Golse, S.~Jin, and C.~D. Levermore.
\newblock A domain decomposition analysis for a two-scale linear transport
  problem.
\newblock {\em M2AN Math. Model. Numer. Anal.}, 37(6):869--892, 2003.

\bibitem{GK}
F.~Golse and A.~Klar.
\newblock A numerical method for computing asymptotic states and outgoing
  distributions for kinetic linear half-space problems.
\newblock {\em J. Statist. Phys.}, 80(5-6):1033--1061, 1995.

\bibitem{Grad}
H.~Grad.
\newblock On the kinetic theory of rarefied gases.
\newblock {\em Comm. Pure Appl. Math.}, 2:331--407, 1949.

\bibitem{GKO}
B.~Gustafsson, H.-O. Kreiss, and J.~Oliger.
\newblock {\em Time dependent problems and difference methods}, volume~24.
\newblock John Wiley \& Sons, 1995.

\bibitem{JLW}
S.~Jin, J.-G. Liu, and L.~Wang.
\newblock A domain decomposition method for semilinear hyperbolic systems with
  two-scale relaxations.
\newblock {\em Math. Comp.}, 82(282):749--779, 2013.

\bibitem{Klar2}
A.~Klar.
\newblock Convergence of alternating domain decomposition schemes for kinetic
  and aerodynamic equations.
\newblock {\em Math. Methods Appl. Sci.}, 18(8):649--670, 1995.

\bibitem{Klar}
A.~Klar.
\newblock Domain decomposition for kinetic problems with nonequilibrium states.
\newblock {\em European J. Mech. B Fluids}, 15(2):203--216, 1996.

\bibitem{leveque1992numerical}
R.~J. LeVeque.
\newblock {\em Numerical methods for conservation laws}, volume 214.
\newblock Springer.

\bibitem{Levermore}
C.~D. Levermore.
\newblock Moment closure hierarchies for kinetic theories.
\newblock {\em J. Statist. Phys.}, 83(5-6):1021--1065, 1996.

\bibitem{Mieussens}
L.~Mieussens.
\newblock Discrete velocity model and implicit scheme for the {BGK} equation of
  rarefied gas dynamics.
\newblock {\em Math. Models Methods Appl. Sci.}, 10(8):1121--1149, 2000.

\bibitem{shu1988efficient}
C.-W. Shu and S.~Osher.
\newblock Efficient implementation of essentially non-oscillatory
  shock-capturing schemes.
\newblock {\em Journal of computational physics}, 77(2):439--471, 1988.

\bibitem{Smoller}
J.~Smoller.
\newblock {\em Shock waves and reaction-diffusion equations}, volume 258 of
  {\em Grundlehren der mathematischen Wissenschaften [Fundamental Principles of
  Mathematical Sciences]}.
\newblock Springer-Verlag, New York, second edition, 1994.

\bibitem{XX}
Z.~Xin and W.-Q. Xu.
\newblock Stiff well-posedness and asymptotic convergence for a class of linear
  relaxation systems in a quarter plane.
\newblock {\em J. Differential Equations}, 167(2):388--437, 2000.

\bibitem{Y2}
W.-A. Yong.
\newblock Boundary conditions for hyperbolic systems with stiff source terms.
\newblock {\em Indiana University mathematics journal}, pages 115--137, 1999.

\bibitem{Y1}
W.-A. Yong.
\newblock Singular perturbations of first-order hyperbolic systems with stiff
  source terms.
\newblock {\em Journal of differential equations}, 155(1):89--132, 1999.

\bibitem{Y3}
W.-A. Yong.
\newblock An interesting class of partial differential equations.
\newblock {\em Journal of mathematical physics}, 49(3):033503, 2008.

\bibitem{ZYL}
W.~Zhao, W.-A. Yong, and L.-S. Luo.
\newblock Stability analysis of a class of globally hyperbolic moment system.
\newblock {\em Commun. Math. Sci.}, 15(3):609--633, 2017.

\bibitem{ZY}
Y.~Zhou and W.-A. Yong.
\newblock Boundary conditions for hyperbolic relaxation systems with
  characteristic boundaries of type ii.
\newblock {\em Journal of Differential Equations}, 310:198--234, 2022.

\end{thebibliography}

\end{document}